\documentclass[a4paper,11pt,colorinlistoftodos,final]{amsart}

\makeatletter
\providecommand\@dotsep{4.5}

\makeatother
\usepackage[t,lf]{spectral}
\usepackage[T1]{fontenc}
\usepackage[english]{babel}
\usepackage{amssymb,amscd,amsmath,amsthm,mathtools}
\usepackage[letterpaper, total={8.5in, 11in},
top=0.75in, inner=1.5in, outer=1.5in,bottom=0.75in
]{geometry}
\usepackage{thmtools}
\usepackage[colorlinks=true,linktocpage=true,citecolor=blue,unicode,hyperindex,breaklinks]{hyperref}
\usepackage[capitalize,noabbrev]{cleveref}

\usepackage{csquotes}

\usepackage{xspace}
\usepackage{dutchcal}
\usepackage{mathrsfs}
\usepackage{ upgreek }
\usepackage{eufrak}
\usepackage[shortlabels]{enumitem}
\usepackage{ dsfont }
\usepackage{mathbbol}
\usepackage{stmaryrd}
\usepackage{float}
\usepackage{tikz}
\usetikzlibrary{cd}
\usepackage{moreenum}
\usepackage{animate}
\usepackage{array}
\usepackage{phoenician}
\usepackage{xstring}
\usepackage{xargs}
\usepackage{adjustbox}
\usepackage{appendix}
\usepackage{datetime}
\usepackage{newtxtext, newtxmath}
\usepackage[obeyFinal]{todonotes}
\usepackage{quiver}

\newdateformat{monthyeardate}{\monthname[\THEMONTH] \THEYEAR}

\usetikzlibrary{babel}
\newcommand{\field}[1]{\mathbb{#1}}

\newcommand{\mc}[1]{\ensuremath{\mathcal{#1}}}
\newcommand{\ul}[1]{\underline{#1}}
\newcommand{\wt}[1]{\widetilde{#1}}

\addto\captionsenglish{}
\theoremstyle{definition}
\newtheorem{definition}{\ul{Definition}}[section]
\crefname{definition}{Definition}{Defininitions}
\newtheorem{proposition}[definition]{\ul{Proposition}}
\newtheorem{lemma}[definition]{\ul{Lemma}}

\newtheorem{corollary}[definition]{\ul{Corollary}}
\newtheorem{example}[definition]{\ul{Example}}
\newtheorem{construction}[definition]{\ul{Construction}}
\crefname{construction}{Construction}{Constructions}
\newtheorem{theorem}[definition]{\ul{Theorem}}
\newtheorem{conjecture}[definition]{\ul{Conjecture}}

\newtheorem*{theorem*}{\ul{Theorem}}
\theoremstyle{remark}
\newtheorem{remark}[definition]{\ul{Remark}}
\newtheorem{notation}[definition]{\ul{Notation}}
\newtheorem*{remark*}{\ul{Remark}}
\newtheorem*{notation*}{\ul{Notation}}


\def\Cc{\mc{C}}

\def\C{\field{C}}

\def\sse{\subseteq}

\def\Z{\mathbb{Z}}
\def\R{\field{R}}

\DeclareMathOperator{\Stein}{\mc{Stein}}

\DeclareMathOperator{\tame}{tame}

\DeclareMathOperator{\QCoh}{QCoh}

\DeclareMathOperator{\Spec}{Spec}

\DeclareMathOperator{\Sp}{Sp}

\DeclareMathOperator{\pd}{\partial}

\DeclareMathOperator{\Liq}{Liq}
\DeclareMathOperator{\Nuc}{Nuc}
\DeclareMathOperator{\dStk}{dStk}

\def\Com{\mc{Com}}

\DeclareMathOperator*{\colim}{colim}

\DeclareMathOperator{\Hom}{Hom}

\DeclareMathOperator{\Mod}{Mod}

\DeclareMathOperator{\CAlg}{CAlg}
\def\EFC{{\operatorname{EFC}}}



\def\O{\ensuremath{\mathcal{O}}}

\def\LL{\mathbb{L}}

\def\I{\ensuremath{\mathcal{I}}}
\def\vp{\varphi}
\def\A{\ensuremath{\mc{A}}}

\def\M{\ensuremath{\mathcal{M}}}

\def\Set{\mc{Set}}

\def\Spc{\texttt{Ani}}
\def\Sp{\mc{Sp}}
\newcolumntype{L}{>{$}l<{$}}
\def\temp{&}
\catcode`&=\active
\let&=\temp
\def\AA{\mathbb{A}}

\def\inf1{(\infty,1)}

\def\CI{\mc{C}^\infty}

\def\X{\mc{X}}

\def\web#1{\href{#1}{web}}
\DeclareRobustCommand{\phalphnum}[1]{
	\IfEqCase{#1}{%
		{1}{{\textphnc{a}}}%
		{2}{{\textphnc{b}}}%
		{3}{\textphnc{g}}%
		{4}{\textphnc{d}}%
		{5}{\textphnc{h}}
		{6}{\textphnc{f}}
		{7}{\textphnc{w}}
		{8}{\textphnc{z}}
		{9}{\textphnc{H}}
		{10}{\textphnc{T}}
		{11}{\textphnc{y}}
		{12}{\textphnc{k}}
		{13}{\textphnc{l}}
		{14}{\textphnc{m}}
		{15}{\textphnc{n}}
		{16}{\textphnc{s}}
		{17}{\textphnc{o}}
		{18}{\textphnc{p}}
		{19}{\textphnc{x}}
		{20}{\textphnc{q}}
		{21}{\textphnc{r}}
		{22}{\textphnc{S}}
		{23}{\textphnc{t}}
	}[\PackageError{\phalphnum}{Undefined option to phalphnum: #1}{}]%
}

\def\arXiv#1{\href{https://arxiv.org/abs/#1}{arXiv:#1}}


\title{Liquid Tannaka duality I: classical case}
\author{Waleed Qaisar}
\thanks{}
\author{Gregory Taroyan}
\thanks{The second author's work was supported by Vanier Canada Graduate Scholarship, funding number CGV — 192668}

\date{\monthyeardate\today}

\begin{document}

\maketitle

    \begin{abstract}
		We prove a Tannaka duality statement for geometric stacks in the setting of \textit{analytic} stacks modelled on globally finitely presented Stein spaces. The key ingredient is the theory of liquid vector spaces and liquid quasicoherent sheaves of Clausen--Scholze. As an application, we reconstruct the topological fundamental group of any complex algebraic variety from its category of liquid local systems. We also reconstruct a series of ``twisted fundamental groupoids'' whose representations correspond to meromorphic flat connections on $\mathbb{A}^1$ with logarithmic or irregular singularities at the origin.
	\end{abstract}
	\setcounter{tocdepth}{1}
	\tableofcontents
	\setcounter{tocdepth}{2}
	\listoftodos{}
\section{Introduction}\label{sec:introduction}
In this paper, we prove a Tannaka duality statement for geometric complex analytic stacks; namely, we prove that these can be reconstructed from their categories of \emph{liquid} quasicoherent sheaves.

We work with geometric stacks modelled on globally finitely presented Stein spaces equipped with the \'etale Grothendieck topology. Examples of such stacks are given by classifying stacks of Stein complex Lie groups, including discrete countable groups like the topological fundamental group of any complex algebraic variety. Thus, as an application of the main result, we reconstruct the topological fundamental group of a complex algebraic variety from its category of liquid local systems. We also apply the result to reconstruct a series of ``twisted fundamental groupoids'' whose representations correspond to meromorphic flat connections on $\mathbb{A}^1$ with prescribed singularity behaviour at the origin, including both logarithmic and irregular singularities. 

The main result of the paper is the following:
\begin{theorem*}[\Cref{thm:TD_for_1_geometric_stacks}]
	Let \(\X\) be a geometric analytic stack over \(\C\) modelled on globally finitely presented Stein spaces with the \'etale topology. Then for any derived globally finitely presented Stein space \(S\), the functor
	\[	\text{Hom}(S, \X) \to \text{Hom}^{\tame}_{\Liq_p(\C)}(\QCoh(\X), \QCoh(S))
	\]
	given by sending a morphism \(F:S\to \X\) to the pullback functor \(F^*:\QCoh(\X)\to \QCoh(S)\) is an equivalence.
\end{theorem*}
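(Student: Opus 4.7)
The plan is to reduce the theorem to the case where $\X$ is itself a globally finitely presented Stein space by d\'evissage along an \'etale atlas, and then handle this affine case directly via a monadicity-style argument adapted to the liquid setting.

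First I would establish the theorem when $\X = U$ is a globally finitely presented Stein space. Here the target of the functor can be analyzed by identifying $\QCoh(U) \simeq \Mod_{\O(U)}(\Liq_p(\C))$ as a symmetric monoidal $\Liq_p(\C)$-linear presentable category. A tame, $\Liq_p(\C)$-linear, colimit-preserving symmetric monoidal functor between such module categories should be classified by a morphism of liquid analytic rings $\O(U) \to \O(S)$ --- the analytic/liquid analog of the standard algebraic statement that such functors between module categories are base-change functors. The tameness condition is what ensures this morphism respects the analytic rather than purely algebraic ring structure, and hence by the universal property of globally finitely presented Stein spaces lifts to a morphism $S \to U$ of derived Stein spaces. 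I would phrase this as a Barr--Beck style theorem: the right adjoint to such an $F$ should be conservative and preserve the colimits that recover global sections of $\O(U)$ from $F^{*}\O(U)$.

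For the reduction to the affine case, I would choose an \'etale atlas $U_{0} \to \X$ and form its \v{C}ech nerve $U_{\bullet} \to \X$, so that $\X \simeq |U_{\bullet}|$ as an analytic stack. Since $\X$ is a stack for the \'etale topology, the left-hand side becomes $\text{Hom}(S,\X) \simeq \Tot(\text{Hom}(S,U_{\bullet}))$. For the right-hand side, one needs $\QCoh$ to satisfy \'etale hyperdescent in the form $\QCoh(\X) \simeq \Tot(\QCoh(U_{\bullet}))$ in the ambient $\infty$-category of $\Liq_p(\C)$-linear presentable symmetric monoidal categories. Mapping out of this totalization into $\QCoh(S)$ then converts it into a totalization on the functor side, and induction on the geometric level together with the affine base case identifies the two towers level-wise, finishing the argument.

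The main obstacle I expect is the compatibility of the tameness condition with the descent step: one must verify that the totalization of the diagrams of \emph{tame}-functor categories really computes the correct mapping object, that tameness is preserved under pullback along \'etale morphisms of Stein spaces, and that it is detected locally so it glues back from an atlas. Equivalently, the descent statement for $\QCoh$ must hold in the enhanced $\infty$-category in which tame functors live, not merely in a category of all colimit-preserving functors. A secondary technical point is the analytic Barr--Beck step in the affine case, which requires showing that the right adjoint of a tame symmetric monoidal $F:\QCoh(U)\to\QCoh(S)$ is conservative and preserves the filtered and geometric-realization type colimits needed to reconstruct $F$ as base change along an honest morphism of liquid analytic rings, rather than merely along a map in some larger category of algebra objects.
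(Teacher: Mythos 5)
Your affine base case is essentially the paper's own argument (Stein duality plus the observation that a monoidal $\Liq_p(\C)$-linear functor induces a map of liquid EFC algebras $\O(U)\to\O(S)$, which then integrates to a holomorphic map $S\to U$), so that part is sound. The problem is the reduction to the affine case, which as stated does not work and which skips the entire content of the theorem. The stack $\X$ is the \emph{colimit} $|U_\bullet|$ of the \v{C}ech nerve of the atlas, and mapping \emph{into} a colimit is not a limit of mapping spaces: the claimed equivalence $\Hom(S,\X)\simeq\Tot(\Hom(S,U_\bullet))$ is false (note also the variance mismatch: $\Hom(S,U_\bullet)$ is simplicial, not cosimplicial). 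Concretely, for $\X=BG$ a map $S\to BG$ is a $G$-torsor on $S$, which need not lift to a map $S\to *$; it lifts only after passing to an \'etale cover of $S$, and producing that cover from the functor $F^*$ is precisely the hard step. Dually, $\QCoh(\X)$ is a \emph{limit} $\Tot(\QCoh(U^\bullet))$, and $\Fun^{\otimes}(\lim_n \QCoh(U_n),\QCoh(S))$ does not decompose as a totalization of the levelwise functor categories, so the right-hand side does not devissage either. Your proposal thus assumes exactly what needs to be proved, namely that a tame functor out of $\QCoh(\X)$ is determined by compatible functors out of the $\QCoh(U_n)$ together with a lift of $F^*$ through the atlas.

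The missing idea --- which is the heart of both Lurie's algebraic argument and the paper's analytic adaptation --- is to apply $F^*$ to the pushforward algebra $p_*\O_U\in\CAlg(\QCoh(\X))$ of the atlas, obtaining an algebra $\A=F^*p_*\O_U$ over $\O_S$, and then to produce a \emph{section} $\A\to\O_S$; such a section is what yields the lift $S\to U$ (\'etale-locally) and hence the map $S\to\X$. Tameness enters to guarantee that $\A$ is faithfully flat (since $p_*\O_U$ is and $F^*$ preserves flatness) and that the identity $p_*\O_U\otimes_{\O_\X}p_*\O_U\simeq p_*\O_U\otimes_A B$ (with $B=\O(U\times_\X U)$) is carried to $\A\otimes_{\O_S}\A\simeq\A\otimes_A B$. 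The genuinely analytic work, which has no counterpart in your outline, is then to show that $\A$ carries a liquid EFC structure (via a $\pi_0$-epimorphism from a free EFC algebra over $\O_S$) and that $\O_S\to\A$ is formally smooth, so that the implicit function theorem for EFC algebras (\Cref{st:infinitesimal_criterion}) produces the section. Your Barr--Beck remark in the affine case is fine but does not substitute for this: without the section-finding argument there is no way to pass from a tame functor on $\QCoh(\X)$ to a geometric morphism.
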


Here the category \(\text{Hom}^{\tame}_{\Liq_p(\C)}(\QCoh(\X), \QCoh(S))\) is the category of \emph{tame} monoidal functors between the categories of $p$-liquid quasicoherent sheaves on \(\X\) and \(S\) linear with respect to the category of liquid vector spaces. The notion of tameness is a technical condition\footnote{As was shown in \cite{Stefanich_Tannaka}, this condition can be dropped in the algebraic setting; however, we prefer to keep it in this work for the sake of simplicity.} that ensures that the functor preserves colimits, flatness, and \(t\)-structure.

The main new ingredient in our approach is the use of liquid mathematics as developed by Clausen--Scholze in \cite{Clausen_Scholze_Condensed, Clausen_Scholze_Analytic, Clausen_Scholze_complex}. Liquid mathematics provides a natural notion of complex analytic \emph{quasicoherent sheaf}. As we demonstrate, the mere existence of this notion is very powerful. In particular, it allows us to extend Lurie's Tannaka duality argument for geometric stacks in algebraic geometry \cite{Lurie_Tannaka_GS} to complex analytic geometric stacks. This is possible because now one can consider pushforward along general holomorphic maps and not just those with finite fibres as was the case for the classical theory of \emph{coherent} analytic sheaves of Cartan--Oka (see \cite{Grauert_Remmert}). 

We note that the framework we use to study analytic stacks is very close to the one of Porta--Yue Yu \cite{Porta_Yu_GAGA, Porta_Yu_Rep} as is shown in \cite[\S 9.2.1.1]{BBKK_Perspective}, and less general than the approach of \cite{Clausen_Scholze_complex}. The reason is that the latter approach does not treat many objects of interest, such as complex Lie groups, as \emph{affine}. We believe that this difference is superfluous, however, and by importing the more refined set-up of categorified Tannaka duality provided by the work of Stefanich \cite{Stefanich_Tannaka} in the algebraic setting into the analytic setting, one should be able to treat more general analytic stacks in the Clausen--Scholze framework as well. We plan to explore this direction in future work.

Finally, we want to emphasize a point that speaks to the power of the approach we use here: the arguments we use can be adopted without any significant changes in the \(\CI\)-setting as well. Notably in the \(\CI\)-context the affineness restriction is essentially non-existent for finite-dimensional objects and thus our results apply to an enormous class of geometric derived stacks, which in particular includes classifying stacks of \textit{all} Lie groups and Lie groupoids. Building on this observation, we intend in future work to describe the functor of points for the source-simply connected Lie groupoid integrating an integrable Lie algebroid directly in terms of the Lie algebroid, formally via Tannaka duality. This would give a construction of the integration of a Lie algebroid which is very different from the usual approach in differential geometry.
\subsection*{Review of the contents} 
In \Cref{sec:preliminaries} we review the necessary preliminaries on liquid vector spaces and liquid modules. We also review the theory of animated entire functional calculus (EFC) algebras and derived Stein spaces as developed in \cite{Pridham_EFC} and \cite{Pirkovskii}.

In \Cref{sec:efc_liquid_algebras} we introduce the notion of liquid EFC algebras based on the extension property along maps from free EFC algebras. We then relate this category with the category of animated EFC algebras introduced in \Cref{sec:preliminaries}. This is a technically important step since it allows us to relate the theory of EFC algebras defined in extrinsic categorical terms with the intrinsic theory of EFC algebras inside of the category of liquid vector spaces. We then prove an implicit function theorem for liquid EFC algebras. This provides an essential geometric input to our main argument, since it enables us to characterize smooth morphisms of derived Stein spaces in terms of their cotangent complex.

In \Cref{sec:duality_results} we prove the main Tannaka duality results of the paper. We start with \Cref{subsec:tannaka_duality_stein} where we prove a Tannaka duality statement for derived Stein spaces and also show that a version of the ``automatic continuity result'' of Forster \cite{forster1966} holds in this setting. That is we show that for derived Stein spaces one may restrict attention to functors enriched in usual complex vector spaces rather than liquid vector spaces. In \Cref{subsec:tannaka_duality_geometric} we prove the main Tannaka duality statement for geometric analytic stacks. The proof is based on the approach of Lurie \cite{Lurie_Tannaka_GS} adapted to the analytic setting. The key difference is that we have to keep track of the analytic structure using the liquid formalism and the intrinsic characterization of EFC algebras developed in \Cref{sec:efc_liquid_algebras}.

\Cref{sec:applications} contains applications of our Tannaka duality results. In \Cref{subsec:reconstructing_stein_lie_groups} we show how to reconstruct any Stein complex Lie group using our Tannaka duality statement. In \Cref{subsec:fundamental_group} we show how to reconstruct the topological fundamental group of any complex algebraic variety, or more generally of any finite CW complex, using our Tannaka duality result. This is in contrast with the algebraic Tannaka duality formalism which only reconstructs the pro-algebraic completion of the fundamental group. Finally, in \Cref{subsec:stokes_groupoids} we show how to reconstruct a series of stacks classifying meromorphic flat connections on $\mathbb{A}^1$ with prescribed singularity behaviour at the origin, including both logarithmic and irregular singularities.
\subsection*{Acknowledgements}
We would like to thank the following people for helpful comments and their interest in our work: Grigory Andreychev, Joseph Ayoub, Alexander Beilinson, Dustin Clausen, Marco Gualtieri, Owen Gwilliam, Jacob Lurie, Akhil Mathew, Kumar Murty, Madhav Nori, Mauro Porta, Nick Rozenblyum, Peter Scholze and Bertrand To\"en.

The second author would like to thank hirs partner Alisa Chistopolskaya for her constant moral support and encouragement.
\subsection*{Conventions}
We adopt an \(\infty\)-categorical perspective throughout the text. We refer the reader to \cite{Lurie_HTT} and \cite{Lurie_HA} for the foundations of \(\infty\)-categories. We denote by \(\Spc\) the \(\infty\)-category of spaces (i.e. \(\infty\)-groupoids); we also refer to this category as the category of \emph{anima}. We denote by \(\CAlg(\Spc)\) the \(\infty\)-category of commutative algebras in anima (i.e. \(E_\infty\)-algebras). For a commutative algebra \(A\in \CAlg(\Spc)\), we denote by \(\Mod_A\) the stable \(\infty\)-category of \(A\)-modules. We refer the reader to \cite{GaRo1} and \cite{GaRo2} for the foundations of higher algebraic geometry.

We use the language of condensed and liquid mathematics as developed by Clausen--Scholze in \cite{Clausen_Scholze_Condensed, Clausen_Scholze_Analytic,Clausen_Scholze_complex}. We use the terms derived and animated interchangeably. For all of our animated algebras, we use cohomological grading conventions. That is, for a connective animated algebra \(A\), the homotopy groups \(\pi_i(A)\) are concentrated in non-positive degrees.

Finally, we use the language of entire functional calculus (EFC) algebras, see \cite{Pridham_EFC} for a discussion of this formalism. These correspond to Stein spaces in complex analytic geometry. In particular, to simplify the exposition, we mostly work with finitely generated EFC algebras, which correspond to \emph{globally finitely presented} Stein spaces. In fact, most of the results we prove hold for arbitrary theories consisting of idempotent algebras. For instance, a similar method gives Tannaka duality for derived \(\CI\)-stacks. However, we prefer to focus on the ``leading example'' of stacks on globally finitely presented derived Stein spaces in this paper, since it already contains all the main ideas of the general case. In particular, this approach to analytic stacks is closely related to the approach of Porta--Yue Yu \cite{Porta_Yu_GAGA, Porta_Yu_Rep}. 

In future work, we plan to explore the general case in more detail. In particular, we plan to study the connection with other approaches to derived analytic geometry, mainly that of Clausen--Scholze \cite{Clausen_Scholze_complex}.
\section{Preliminaries}\label{sec:preliminaries}

\subsection{Liquid vector spaces and liquid modules}\label{subsec:liquid_vector_spaces}
\begin{definition}
    A condensed anima is a sheaf of homotopy types on the pro-étale site of a point. A condensed set is a \(0\)-truncated (static) condensed anima.
\end{definition}
Below we define the condensed abelian group of ``less-than-\(p\)-measures'' on a given profinite set. This is the basic building block of the theory of \(p\)-liquid vector spaces. 
\begin{definition}
    Fix a real parameter \(0<p\leq 1\) once and for all. 
    \begin{enumerate}
        \item Let \(S\) be a finite set. We define the condensed abelian group \(\M_p(S)\) as the following colimit in condensed sets:
        \[
        \M_p(S) = \colim_{C\in \R_{>0}} \M(S)_{\ell^p\le C}
        \]
        Here \(\M(S)_{\ell^p\le C}\) is the condensed set of signed measures on \(S\) with the \(\ell^p\)-norm bounded by \(C\).
        \item Let \(S\) be a profinite set presented as a limit of finite sets \(S=\lim_i S_i\). We define the condensed abelian group \(\M_p(S)\) as follows:
        \[
            \M_p(S)=\lim_i \M_p(S_i).
        \]
        \item For a profinite set \(S\) define the measure space \(\M_{<p}(S)\) as follows:
        \[
            \M_{<p}(S)=\bigcup_{q<p} \M_q(S).
        \]
    \end{enumerate}
\end{definition}
\begin{definition}\label{def:liquid_groups}
    A condensed abelian group \(V\) is called a \emph{\(p\)-liquid vector space} if it satisfies the following extension property:
    \[
        \begin{tikzcd}
            S \arrow[rr]\arrow[dr] & & V\\
            & \M_{<p}(S) \arrow[ur,dashed,"\text{exists and unique}",swap]
        \end{tikzcd}
    \]
\end{definition}
\begin{remark}
	\Cref{def:liquid_groups} implies that a \(p\)-liquid condensed abelian group is an \(\R\)-condensed vector space. This is shown in \cite[Theorem 3.11]{Clausen_Scholze_complex}.
\end{remark}
\begin{definition}
	We denote by \(\Liq_p(\R)\) the stable category of \(p\)-liquid vector spaces. Algebra objects inside this category are called \emph{liquid $\mathbb{R}$-algebras}. Since $\mathbb{C}$ is a liquid $\mathbb{R}$-algebra, one can take the category of modules for $\mathbb{C}$ inside \(\Liq_p(\R)\) and define this to be the category of \emph{complex \(p\)-liquid vector spaces}, which is denoted \(\Liq_p(\C)\). Finally, one calls algebra objects in \(\Liq_p(\C)\) liquid $\mathbb{C}$-algebras.
\end{definition}
\begin{definition}
	Given a liquid \(\C\)-algebra \(A\) we denote by \(\Mod_A\) its category of modules in \(\Liq_p(\C)\).
\end{definition}

\subsection{Liquid structure on entire holomorphic functions}\label{subsec:liquid_structure_holomorphic}
In this section, we review the construction of entire holomorphic functions on the affine space over \(\C\) as a \(p\)-liquid algebra. The crux of this approach is the same as in \cite[Lecture 4]{Clausen_Scholze_complex}, but some details are slightly different since we are considering functions on the affine space over \(\C\) instead of some finite radius polydisk.
\begin{theorem}[{\cite[Theorem 5.1]{Clausen_Scholze_complex}}]
            There is a (necessarily unique) sheaf $\mathcal{O}$ of $p$-liquid $\mathbb{C}$-algebras on the topological space $\mathbb{C}$ such that for any open disk $D = D(x,r) = \{z \in \mathbb{C}\ | \  |z-x|<r\} \subset \mathbb{C} $,
            one has $\mathcal{O}(D) = \{ \sum_{n=0}^{\infty} a_n(T-x)^n \ | \ a_n \in \mathbb{C}, \forall r'<r, a_n r'^n \rightarrow 0 \}$, with the obvious transition maps for $D' \subset D$.
            
            Moreover for any such disc $D$, the sheaf cohomology groups $H^i(D, \mathcal{O}) = 0$ for $i > 0$.
        \end{theorem}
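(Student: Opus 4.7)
The plan is to build $\mathcal{O}$ first on the basis of open discs as a presheaf of $p$-liquid $\mathbb{C}$-algebras, verify the sheaf axiom, and then establish the cohomology vanishing on each disc. Uniqueness will be automatic from the basis description.

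The first step is to equip $\mathcal{O}(D)$ with a $p$-liquid $\mathbb{C}$-algebra structure. For $D = D(x,r)$, one writes
\[
    \mathcal{O}(D) = \lim_{r' < r} A_{r'},
\]
where $A_{r'}$ is the Banach algebra of holomorphic functions on $D(x,r')$ that extend continuously to the closed disc $\overline{D(x,r')}$ with the supremum norm; the identification with the prescribed power-series condition is the classical Cauchy estimate. Each $A_{r'}$ carries a canonical $p$-liquid structure via the Clausen--Scholze liquification of Banach spaces, under which continuous multiplication becomes automatically a morphism of liquid modules. Since limits in $\Liq_p(\mathbb{C})$ preserve algebra structure, this yields the desired liquid $\mathbb{C}$-algebra. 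Transition maps for $D' \subset D$ are induced termwise by restriction to the closed sub-discs exhausting $D'$.

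The second step is to verify the sheaf axiom on the basis of open discs. For a covering $D = \bigcup_i D_i$, the classical result says that a compatible family of holomorphic sections glues to a unique element of $\mathcal{O}(D)$, whose Taylor expansion at $x$ reproduces the power-series presentation. The liquid upgrade is straightforward: both the restriction maps and the gluing map are defined already at each Banach approximant $A_{r'}$ and are continuous there, hence are morphisms of liquid modules after passing to the limit. Sheafification then extends $\mathcal{O}$ from the basis to a sheaf on all of $\mathbb{C}$, and uniqueness is immediate from the local description.

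The third and deepest step is the cohomology vanishing $H^i(D,\mathcal{O})=0$ for $i>0$. The plan is to exhibit a $p$-liquid resolution of $\mathcal{O}$ by acyclic sheaves. The natural option is a liquid Dolbeault resolution: introduce $p$-liquid sheaves of smooth functions $\mathcal{A}^0$ and $(0,1)$-forms $\mathcal{A}^{0,1}$, and show that
\[
    0 \to \mathcal{O} \to \mathcal{A}^0 \xrightarrow{\bar\partial} \mathcal{A}^{0,1} \to 0
\]
is a short exact sequence of liquid sheaves, with $\mathcal{A}^0$ and $\mathcal{A}^{0,1}$ soft and hence acyclic. Surjectivity of $\bar\partial$ on a disc is the classical solvability result via the Cauchy--Pompeiu integral; what is needed for the liquid upgrade is that this solution operator defines a morphism of $p$-liquid sheaves, which in turn follows from uniform norm estimates on relatively compact sub-discs.

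The main obstacle is this last point: one must verify that the Cauchy--Pompeiu solution operator is bounded with respect to the norms defining each Banach approximant $A_{r'}$, uniformly enough to assemble into a morphism of liquid sheaves after inverse limit. This is the quantitative refinement of the classical $\bar\partial$-lemma that makes the argument work in the liquid setting; once the estimates are in place, the cohomological conclusion proceeds exactly as in the classical Dolbeault computation.
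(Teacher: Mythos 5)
This statement is imported verbatim from Clausen--Scholze (\cite[Theorem 5.1]{Clausen_Scholze_complex}); the paper gives no proof of its own, so the only meaningful comparison is with the source. There your route genuinely differs: Clausen--Scholze do not use a Dolbeault resolution. They first define $\mathcal{O}$ on \emph{closed} discs as a colimit of Banach (overconvergent) algebras, prove the \v{C}ech-type exactness
\[
0 \to \mathcal{O}(\overline{D}) \to \mathcal{O}(\overline{D_1}) \oplus \mathcal{O}(\overline{D_2}) \to \mathcal{O}(\overline{D_1} \cap \overline{D_2}) \to 0
\]
for a cover of a closed disc by two closed pieces via an explicit Cousin-type splitting of power/Laurent series with quantitative control on coefficients, and then pass to open discs as inverse limits. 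That argument is self-contained at the level of $\ell^p$-estimates on Taylor coefficients and is what makes the liquid exactness transparent. Your approach front-loads the analysis into the Cauchy--Pompeiu operator instead; you correctly identify that the needed input is boundedness of that operator uniformly over the Banach approximants, and that part is fine.

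The genuine gap is the step ``$\mathcal{A}^0$ and $\mathcal{A}^{0,1}$ are soft and hence acyclic.'' The cohomology groups $H^i(D,\mathcal{O})$ in the statement are computed in the derived category of sheaves of condensed (liquid) abelian groups, not of plain abelian groups or topological vector spaces. The classical theorem ``soft implies acyclic on a paracompact space'' is not available off the shelf in this setting: softness must mean that restriction to closed subsets is an epimorphism of \emph{condensed} abelian groups, and the acyclicity argument (extension of sections, partition-of-unity homotopies in the \v{C}ech--Dolbeault double complex) must be re-run with all maps checked to be morphisms of liquid sheaves. This is plausibly true --- multiplication by a fixed cutoff function is a continuous, hence liquid, operator on nuclear Fr\'echet spaces --- but as written it is an unproved assertion carrying the entire cohomological content of the theorem, and it is precisely the point the Clausen--Scholze argument is engineered to avoid. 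Relatedly, exactness of your resolution and the sheaf axiom must be verified as statements about condensed objects (e.g.\ that the equalizer diagram is an isomorphism of liquid vector spaces, not merely a bijection on underlying sections); you gesture at this but the open-mapping-type inputs should be made explicit.
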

\begin{construction}\label{con:liquid_struct_on_EF}
        By the above to describe the liquid structure on the algebra of holomorphic functions on $\mathbb{C}$, we just need to produce the global sections of this sheaf of liquid algebras.

        Since we can present $\mathbb{C}$ as a colimit of disks by taking a nested union $\mathbb{C} = \bigcup_{r=1}^{\infty} D(0,r)$, the global sections of this sheaf will be given as the limit $\mathcal{O}(\mathbb{C}) = \varprojlim \mathcal{O}(D(0,r)) = \{ \sum_{n=0}^{\infty} a_nT^n \ | \ a_n \in \mathbb{C}, \forall r, a_n r^n \rightarrow 0 \}$ (noting that liquid vector spaces are closed under limits, and so this limit will also be a liquid algebra).

        Similarly, the liquid structure on the algebra of holomorphic functions on $\mathbb{C}^n$ can be given as the one coming as the limit of liquid algebras of holomorphic functions on polydisks $D^n$ which present $\mathbb{C}^n$ as a nested union. 

        Alternatively, one could use the fact that the algebra of entire holomorphic functions is nuclear Fr\'echet to produce the same liquid structure on it.
\end{construction}
\begin{proposition}
	The algebra of entire holomorphic functions in \(n\) variables \(\O\{t_1,\ldots,t_n\}\) with the above liquid structure is idempotent over \(\C[t_1,\ldots,t_n]\) in the category \(\CAlg(\Liq_p(\C))\), i.e. the canonical map
	\[
		\O\{t_1,\ldots,t_n\}\otimes_{\C[t_1,\ldots,t_n]} \O\{t_1,\ldots,t_n\}\to \O\{t_1,\ldots,t_n\}
	\]
	is an equivalence.
	Moreover, the algebra \(\O\{t_1,\ldots,t_n\}\) is flat over \(\C\) with respect to the \(p\)-liquid tensor product.
\end{proposition}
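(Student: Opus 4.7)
The plan is to reduce to the case \(n=1\) and exploit the inverse limit presentation \(\O\{t\} = \varprojlim_r \O(D(0,r))\) from \Cref{con:liquid_struct_on_EF}, combined with the analogous idempotency and flatness results for the finite-disk algebras established by Clausen--Scholze in Lecture 5 of \cite{Clausen_Scholze_complex}.

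\textbf{Reduction to \(n=1\).} Directly from the construction, the liquid algebra \(\O\{t_1,\ldots,t_n\}\) identifies with the \(p\)-liquid tensor product \(\O\{t_1\} \otimes_{\C} \cdots \otimes_{\C} \O\{t_n\}\), since both present the algebra of holomorphic functions on \(\C^n\) viewed as the nested union of products of one-dimensional disks. Once flatness over \(\C\) is established in one variable, this tensor product agrees with its derived version, and idempotency for general \(n\) follows from the one-variable case by iterating: a tensor product of idempotent algebra maps in \(\CAlg(\Liq_p(\C))\) remains idempotent.

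\textbf{One-variable case.} By \cite[Lecture 5]{Clausen_Scholze_complex}, each \(\O(D(0,r))\) is idempotent over \(\C[t]\) and flat over \(\C\) in \(\CAlg(\Liq_p(\C))\), and the restriction maps \(\O(D(0,r')) \to \O(D(0,r))\) for \(r<r'\) are compatible with these structures. Idempotency of \(\O\{t\}\) would then follow from the chain
\[
\O\{t\} \otimes_{\C[t]} \O\{t\} \simeq \varprojlim_{r,r'} \O(D(0,r)) \otimes_{\C[t]} \O(D(0,r')) \simeq \varprojlim_r \O(D(0,r)) \simeq \O\{t\},
\]
where the middle equivalence uses disk-wise idempotency (which collapses the inner tensor product to the algebra on the smaller disk), and the outer equivalences require commuting the derived \(p\)-liquid tensor product with the inverse limits. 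Flatness over \(\C\) reduces similarly to flatness of each \(\O(D(0,r))\) together with the preservation of flatness under the limit.

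\textbf{Main obstacle.} The principal difficulty is justifying that the derived \(p\)-liquid tensor product commutes with the inverse limit presenting \(\O\{t\}\). The resolution rests on nuclearity of the projective system \((\O(D(0,r)))_r\): classically, the restriction maps between disk algebras are trace-class operators between nuclear Fr\'echet spaces, and this translates in the liquid framework into nuclearity of the transition maps, which is precisely what ensures that tensor products commute with such limits and that higher \(\Tor\) groups vanish. Once this nuclearity input is used, both the idempotency and flatness statements for \(\O\{t\}\) propagate from the disk case, completing the proof.
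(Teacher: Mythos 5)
Your argument reaches the same two inputs as the paper but routes the idempotency claim differently. For flatness the two proofs coincide: the paper also observes that \(\O\{t\}\) is a countable inverse limit along summable (trace-class) maps of measure-type spaces and deduces flatness over \(\C\) from \cite[Lecture 4]{Clausen_Scholze_complex}. For idempotency, however, the paper does not pass through the disk algebras at all: it computes \(\O\{t\}\otimes_{\C[t]}\O\{t\}\) via the bar complex and uses that the multiplication map \(\O\{t\}\otimes_\C\O\{t\}\to\O\{t\}\) is surjective with kernel generated by the regular element \(t\otimes 1-1\otimes t\), so the relative tensor product collapses by a Koszul computation; the case of general \(n\) is then handled by induction, much as in your tensor-decomposition step. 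Your route instead propagates idempotency from the disks up through the limit, which buys a cleaner reliance on the Clausen--Scholze disk results but places all the weight on commuting the relative tensor product with the double inverse limit.

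That commutation is the one place where your argument is incomplete as written. Nuclearity of the transition maps does give \((\varprojlim_r V_r)\otimes W\simeq\varprojlim_r(V_r\otimes W)\) for the \emph{absolute} liquid tensor product of a sequential trace-class limit (this is exactly the flatness mechanism), but \(-\otimes_{\C[t]}-\) is computed by a bar complex, i.e.\ a geometric realization, and geometric realizations do not commute with inverse limits in general; an appeal to nuclearity alone does not discharge this. The clean fix is precisely the paper's regular-sequence observation: since the kernel of \(\C[t]\otimes_\C\C[t]\to\C[t]\) is generated by the regular element \(t\otimes 1-1\otimes t\), one has
\[
B\otimes_{\C[t]}B'\simeq\bigl(B\otimes_\C B'\bigr)\otimes_{\C[t]\otimes_\C\C[t]}\C[t],
\]
with \(\C[t]\) perfect (Koszul) over \(\C[t]\otimes_\C\C[t]\), so the relative tensor product is a finite colimit of absolute ones and therefore does commute with the limits. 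With that supplement your chain of equivalences goes through, and the two proofs essentially converge on the same Koszul input.
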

\begin{proof}
	The proof relies on the fact that \(\O\{t\}\) is a Fr\'echet algebra and thus is an inverse limit along summable maps of measure spaces. Hence, it is flat over \(\C\) with respect to the \(p\)-liquid tensor product, see \cite[Lecture 4]{Clausen_Scholze_complex} for details. Now using the bar complex to calculate the relative tensor product we see that the idempotency follows from the fact that the multiplication map \(\O\{t\}\otimes_{\C[t]} \O\{t\}\to \O\{t\}\) is surjective with kernel generated by the regular sequence \(t\otimes 1 - 1\otimes t.\) The general case follows by induction on \(n.\)
\end{proof}
\subsection{Animated EFC algebras}\label{subsec:animated_efc_algebras}
\begin{definition}
	The (\(1-\)) category \(\O\) is defined as follows.
	\begin{itemize}
		\item The objects of \(\O\) are the finite-dimensional vector spaces over \(\C\).
		\item The morphisms of \(\O\) are the holomorphic maps between them.
	\end{itemize}
	An animated EFC algebra is a functor \(\O\to \Spc\) which preserves finite products. We denote the category of EFC algebras by \(\EFC(\Spc)\).
\end{definition}
\begin{construction}
	There is a functor from the category \(\Com_\C\) of free finitely generated commutative algebras over \(\C\) to the category \(\O.\) This functor induces an adjoint pair between the category of commutative algebras in anima \(\CAlg(\Spc)\) and the category of EFC algebras in anima \(\EFC(\Spc)\). The left adjoint is given by left Kan extension along the above functor. The right adjoint is given by restriction; we also call it the \emph{underlying algebra} functor.
\end{construction}
\begin{definition}
	We denote by \(\O(\C^n)\) the EFC algebra of entire holomorphic functions on \(\C^n\) given by the following functor.
	\[
		\O(\C^n):\O\to \Spc,\quad \C^m\mapsto\C\{T_1,\ldots,T_n\}^{\times m}\in \Set\sse \Spc.
	\]
\end{definition}
\begin{definition}
    A \(\C\)-linear map \(d:A\to M\) from an EFC algebra to a module \(M\) over \(A\) is an \emph{\(\O\)-derivation} if it satisfies the Leibniz rule with respect to all holomorphic operations. That is, we have the following identity for any $k \in \mathbb{N}$, for any entire map $\varphi: \mathbb{C}^k \to \mathbb{C}$:
    \[
        d(\vp(a_1,\ldots,a_k))=\sum_{i=1}^k\frac{\pd \vp}{\pd z_i}(a_1,\ldots,a_k)da_i.
    \]
\end{definition}
\begin{definition}
    The module of EFC K\"ahler differentials is defined as the \(A\)-module corepresenting the functor of \(\O\)-derivations. We denote this module by \(\Omega_{A/\C}.\) More generally, one defines the module of relative EFC K\"ahler differentials \(\Omega_{B/A}\) as the \(B\)-module corepresenting the functor of \(\O\)-derivations which are \(A\)-linear from \(B\) to a \(B\)-module that is an \(A\)-module via the given morphism \(A\to B.\)
\end{definition}
A derived version of the module of EFC K\"ahler differentials can be defined using the cotangent complex formalism in higher algebra \cite[Chapter 7]{Lurie_HA}, as follows:
\begin{definition}
	Let \(A\) be an EFC algebra. We define its category of modules \(\Mod_A\) as the stabilized slice category \(\Sp(\EFC(\Spc)_{/A})\), where $\mathcal{Sp}$ denotes spectra objects. Then the EFC cotangent complex of \(A\) is defined as the object \(\LL_A^\EFC\in \Mod_A\) given by the suspension spectrum of the identity map~\(A\to A\).

	Similarly, for a morphism of EFC algebras \(B\to A\), we define the relative EFC cotangent complex \(\LL_{A/B}^\EFC\in \Mod_A\) as the cofiber of the canonical map \(\LL_B\otimes_B A\to \LL_A\).
\end{definition}
\begin{example}
	The EFC cotangent complex of the EFC algebra \(\O(\C^n)\) is equivalent to the free module \(\O(\C^n)^{\oplus n}\) generated by the differentials \(dT_1,\ldots, dT_n\).
\end{example}
\subsection{Derived Stein spaces and derived holomorphic stacks}\label{subsec:derived_stein_spaces}
\begin{definition}
	We denote by \(\Stein\) the category of globally finitely presented derived Stein spaces, i.e. the dual category of the category of finitely generated EFC algebras.

	This category admits a subcanonical Grothendieck topology generated by families of \'etale maps of Stein spaces which are jointly surjective. The category of derived holomorphic stacks \(\dStk_\O\) is defined as the category of \Spc-valued sheaves on \(\Stein\) with Grothendieck topology as above.
\end{definition}
\begin{remark} As mentioned in the introduction, we restrict the local models to globally finitely presented Stein spaces rather than all Stein spaces to simplify the exposition. The general case should be doable similarly using more general liquid algebras.
\end{remark}
\begin{definition}
	The category \(\QCoh(X)\) of \(p\)-liquid quasicoherent sheaves on a derived holomorphic stack \(X\in \dStk_\O\) is defined as the limit
	\[
		\QCoh(X) = \lim_{S\in \Stein_{/X} } \Mod_{\O(S)}
	\]
	in the \(\infty\)-category of stable \(\infty\)-categories.
\end{definition}

\begin{notation}
    In the above definition, we fix a $p$ once and for all and denote the category simply by QCoh without any mention of $p$, even though we have been referring to $\text{Liq}_p(\mathbb{C)} = \text{QCoh}(*)$ with $p$ explicitly in the notation.
\end{notation}
\section{Liquid EFC algebras}\label{sec:efc_liquid_algebras}
In this section, we introduce and study an auxiliary notion of liquid EFC algebras. As we will see shortly, this notion is equivalent to the notion of an animated EFC algebra.
\subsection{Definition and basic properties}\label{subsec:definition_basic_properties}
Let $\CAlg(\Liq_p(\C))$ denote the category of commutative (\(E_\infty\)-) algebra objects inside the category of liquid vector spaces over $\C$. Below, we will define a full subcategory of $\CAlg(\Liq_p(\C))$, denoted $\EFC\Liq_p(\C)$, of liquid commutative algebras with extra properties of convergence.
\begin{notation}
    We denote by \(\C\{t_1,\ldots,t_n\}\) the algebra of entire holomorphic functions on \(\C^n\) in variables \(t_1,\ldots, t_n\) with the \(p\)-liquid structure coming from the standard structure Fr\'echet structure of convergence on compact subsets.
\end{notation}

\begin{definition}\label{def:absolute_EFC}
    A ``liquid entire functional calculus algebra'' (or liquid EFC algebra) is an object $A \in \CAlg(\Liq_p(\C))$ satisfying the following property for every \(n\in\Z_{\ge 0}:\)
    \[
        \Hom_{\CAlg(\Liq_p(\C)}(\C\{t_1,\ldots,t_n\},A)\xrightarrow{\simeq}\Hom_{\CAlg(\Liq_p(\C)}(\C[t_1,\ldots,t_n],A)
    \]
    Here, the morphism is induced by the inclusion of polynomials into the algebra of entire holomorphic functions. We will refer to the full subcategory of \(\CAlg(\Liq_p(\C)\) spanned by these objects as the category of liquid EFC algebras.
\end{definition}
\begin{lemma}
    The algebra of holomorphic functions on a globally finitely presented Stein space is a liquid EFC algebra.
\end{lemma}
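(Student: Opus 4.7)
The plan is to combine the idempotency of $\C[t_1,\ldots,t_n] \to \C\{t_1,\ldots,t_n\}$ from the previous proposition (giving uniqueness) with a pullback construction (giving existence). From idempotency, a standard $\infty$-categorical argument (see e.g.~\cite[HA~4.8.2]{Lurie_HA}) shows that for every $A \in \CAlg(\Liq_p(\C))$ the restriction map
\[
\Hom_{\CAlg(\Liq_p(\C))}(\C\{t_1,\ldots,t_n\}, A) \to \Hom_{\CAlg(\Liq_p(\C))}(\C[t_1,\ldots,t_n], A)
\]
is $(-1)$-truncated: any polynomial algebra map admits at most one extension, up to a contractible space of choices. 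So the lemma reduces to producing, for $A = \O(X)$ with $X$ globally finitely presented Stein, at least one extension $\tilde\phi : \C\{t_1,\ldots,t_n\} \to \O(X)$ of each $\phi : \C[t_1,\ldots,t_n] \to \O(X)$.

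For existence I would argue geometrically. Such a $\phi$ is classified by an $n$-tuple $(a_1,\ldots,a_n) \in \O(X)^n$, equivalently a holomorphic map $a : X \to \C^n$. The natural candidate for the extension is the pullback
\[
\tilde\phi = a^* : \C\{t_1,\ldots,t_n\} = \O(\C^n) \longrightarrow \O(X), \qquad f \mapsto f \circ a,
\]
which at the level of abstract $\C$-algebras visibly restricts to $\phi$ on the polynomial subalgebra. All that remains is to upgrade $a^*$ from a map of abstract $\C$-algebras to a morphism in $\CAlg(\Liq_p(\C))$.

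This liquid-continuity upgrade is the main obstacle. Both $\C\{t_1,\ldots,t_n\}$ and $\O(X)$ are nuclear Fr\'echet $\C$-algebras, and by \Cref{con:liquid_struct_on_EF} their liquid structures are the canonical ones coming from the Fr\'echet topology of locally uniform convergence. The substitution $a^*$ is continuous for that topology, since a sequence of entire functions converging locally uniformly on $\C^n$ composes with the holomorphic $a$ to a sequence converging locally uniformly on $X$. Since nuclear Fr\'echet $\C$-vector spaces embed fully faithfully and symmetric-monoidally into $\Liq_p(\C)$ (see \cite[Lecture~4]{Clausen_Scholze_complex}), Fr\'echet continuity of the multiplicative map $a^*$ automatically promotes it to a morphism in $\CAlg(\Liq_p(\C))$, which together with the uniqueness established in the first paragraph proves the lemma. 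A more hands-on alternative avoiding this embedding is to present $\C\{t_1,\ldots,t_n\} = \lim_r \O(\bar D^n_r)$ as a liquid limit and construct $a^*$ as the limit of the restriction maps $\O(\bar D^n_r) \to \O(a^{-1}(\bar D^n_r)) \to \O(X)$, each of which lives in $\CAlg(\Liq_p(\C))$ directly by the construction of the liquid structure on holomorphic functions on bounded polydisks.
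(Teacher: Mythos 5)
Your route is genuinely different from the paper's. The paper's proof is a one-liner: $\O(X)$ is a finite colimit of free algebras $\C\{t_1,\ldots,t_n\}$, and the liquid EFC condition is closed under finite colimits. You instead split the extension property into uniqueness (via idempotency of $\C\{t_1,\ldots,t_n\}$ over $\C[t_1,\ldots,t_n]$, which correctly reduces the problem to $\pi_0$-surjectivity of the restriction map — the same yoga the paper uses later in the proof of \Cref{thm:fully_faithfullness_of_EFC}) and existence (via the explicit geometric extension $a^*$). For a \emph{classical} globally finitely presented Stein space this works and is more concrete than the paper's argument: composition with $a$ is continuous for the Fr\'echet topologies, and the monoidal embedding of nuclear Fr\'echet spaces into $\Liq_p(\C)$ promotes it to a map of liquid algebras, with no higher coherences to check between static objects.

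The gap is that the lemma is needed, and is used in the paper (e.g.\ in the Tannaka duality for Stein spaces, where $\O(U)$ and $\O(S)$ are algebras of functions on \emph{derived} Stein spaces), in the animated setting, where $\O(X)$ may have nonvanishing higher homotopy. Your existence step breaks there: a point of $\Hom_{\CAlg(\Liq_p(\C))}(\C[t_1,\ldots,t_n],\O(X))$ only determines an $n$-tuple in $\pi_0\O(X)$, i.e.\ a holomorphic map $a\colon \pi_0 X\to\C^n$, so the recipe ``$f\mapsto f\circ a$'' produces a map $\C\{t_1,\ldots,t_n\}\to\pi_0\O(X)$ rather than a map into $\O(X)$, and there is no algebra section of $\O(X)\to\pi_0\O(X)$ to push it back up. To repair this you would need an additional lifting argument — for instance, that $\C[t_1,\ldots,t_n]\to\C\{t_1,\ldots,t_n\}$ is formally \'etale ($\LL_{\C\{t_\bullet\}/\C[t_\bullet]}=0$, a consequence of idempotency), so that the extension constructed on $\pi_0$ lifts uniquely up the Postnikov tower of $\O(X)$ — or else fall back on the paper's finite-colimit presentation. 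As written, the proposal establishes the classical case only.
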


\begin{proof}
    The algebra of holomorphic functions on a globally finitely presented is a finite colimit of algebras of entire holomorphic functions. Since the category of liquid EFC algebras is closed under finite colimits, the result follows.
\end{proof}
Similarly, one defines a relative version of this notion. 
\begin{definition}
	An \emph{EFC point of a liquid commutative algebra \(B\)} is a morphism of liquid commutative algebras \(B\to C\) where \(C\) is a liquid EFC algebra. 

	Given a liquid algebra \(A\), we say that it has \emph{enough EFC points} if there is a collection of EFC points \(\{B\to C_i\}_{i\in I}\) such that the induced base change functors on liquid module categories are jointly conservative.
\end{definition}
\begin{definition}
    Let \(B\to A\) be a morphism of liquid commutative algebras. Then we say that the morphism \(B\to A\) defines a structure of a \emph{relative EFC algebra over \(B\)} if for any EFC point of \(A\to C\) the base change of the morphism \(A\to B\) to \(C\) defines an EFC algebra structure on the liquid algebra \(A\otimes_B C\) over \(C\) in the sense of \Cref{def:absolute_EFC}.
\end{definition}
\begin{definition}
	A \emph{concretely relative EFC algebra} over a liquid commutative algebra \(B\) is a liquid commutative algebra \(A\) over \(B\) such that there exists a presentation of \(A\) as a quotient of a free liquid EFC algebra over \(B\). I.e. there exists a \(\pi_0\)-epimorphism of liquid algebras
	\[
	B\otimes_{\Liq_p}\C\{t_i\}_{i\in S}\to A.
	\]
\end{definition}
\begin{proposition}
	If a liquid commutative algebra \(A\) over a liquid commutative algebra \(B\) is relative EFC and \(B\) has enough EFC points, then \(A\) is concretely relative EFC over \(B\).
\end{proposition}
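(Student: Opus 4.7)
The plan is to produce the presentation by first finding a polynomial one and then lifting it to an EFC presentation via descent along the EFC points of $B$. To start, fix a set $S$ of generators together with a $\pi_0$-epimorphism of liquid commutative $B$-algebras
\[
\phi\colon B \otimes_{\Liq_p(\C)} \C[t_s]_{s\in S} \twoheadrightarrow A,
\]
which exists by the standard free-resolution construction in higher algebra applied to $A \in \CAlg(\Liq_p(\C))_{B/}$. Writing $R = B\otimes_{\Liq_p(\C)}\C[t_s]$ and $R' = B\otimes_{\Liq_p(\C)}\C\{t_s\}$ (interpreted for infinite $S$ as the filtered colimit of the finite-variable pieces), the idempotency of $\C\{t_1,\ldots,t_n\}$ over $\C[t_1,\ldots,t_n]$ proved above implies that $R \to R'$ is itself idempotent. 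Hence the forgetful functor from liquid $R'$-algebras to liquid $R$-algebras is fully faithful with left adjoint given by base change, and extending $\phi$ to a map $\tilde\phi\colon R'\to A$ is equivalent to showing that the unit $\iota\colon A \to A\otimes_R R'$ is an equivalence. Any such $\tilde\phi$ is automatically a $\pi_0$-epimorphism, since $\phi$ factors through it.

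To verify that $\iota$ is an equivalence, we invoke the hypothesis that $B$ has enough EFC points: the base-change functors $\{-\otimes_B C_i\}_{i\in I}$ along EFC points $\{B\to C_i\}$ are jointly conservative on $\Mod_B$. Viewing $\iota$ as a morphism of liquid $B$-modules, it suffices to show that $\iota\otimes_B C_i$ is an equivalence for every $i$. A direct computation with the tensor products identifies $\iota\otimes_B C_i$ with the canonical map
\[
A\otimes_B C_i \;\longrightarrow\; (A\otimes_B C_i)\otimes_{C_i\otimes_{\Liq_p(\C)}\C[t_s]}\bigl(C_i\otimes_{\Liq_p(\C)}\C\{t_s\}\bigr).
\]
By the relative EFC hypothesis on $A$ over $B$, each $A\otimes_B C_i$ is an absolute liquid EFC algebra, so the map of liquid $C_i$-algebras $C_i\otimes\C[t_s] \to A\otimes_B C_i$ extends uniquely to $C_i\otimes\C\{t_s\} \to A\otimes_B C_i$. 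This extension witnesses $A\otimes_B C_i$ as lying in the essential image of the fully faithful inclusion of liquid $(C_i\otimes\C\{t_s\})$-algebras into liquid $(C_i\otimes\C[t_s])$-algebras, so the displayed map is an equivalence, as required.

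The main conceptual work is the bookkeeping with possibly infinite $S$: one must interpret $\C\{t_s\}_{s\in S}$ as the filtered colimit over finite $S'\subseteq S$ of the finite-variable versions, and verify that idempotency, the EFC extension property, and the $p$-liquid tensor product are all compatible with this colimit. These are routine manipulations within the Clausen--Scholze framework, but they are the most laborious step. The conservativity of base change along the family of EFC points is the only place where the \emph{enough EFC points} hypothesis enters the argument, and it serves precisely to convert the formal ``idempotent extension'' lemma for absolute EFC algebras into the relative statement we need; the rest of the proof is essentially formal.
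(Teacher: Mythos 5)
Your argument is correct, but it takes a genuinely different route from the paper's. The paper works fiberwise from the start: for each EFC point $B\to C_i$ it chooses an EFC presentation $C_i\otimes_{\Liq_p}\C\{t_j\}_{j\in S_i}\twoheadrightarrow A\otimes_B C_i$, tensors these together over $B$, and argues (rather tersely) that joint conservativity upgrades the resulting map into $\prod_i(A\otimes_B C_i)$ to a $\pi_0$-epimorphism $B\otimes_{\Liq_p}\C\{t_j\}_{j\in S}\to A$ with $S=\bigsqcup_i S_i$. You instead fix a single polynomial presentation $R=B\otimes\C[t_s]\twoheadrightarrow A$ upstairs and use idempotency of $\C\{t\}$ over $\C[t]$ to reduce the existence of the EFC extension to the assertion that the unit $A\to A\otimes_R R'$ is an equivalence, which you then verify after base change to each $C_i$ --- the only place where conservativity and the relative EFC hypothesis enter. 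Your version buys two things: the generating set is intrinsic to $A$ over $B$ rather than a disjoint union over all chosen EFC points, and the ``enough EFC points'' hypothesis is used exactly in the form it is stated (detecting an equivalence of $B$-modules), whereas the paper's corresponding step --- deducing a $\pi_0$-epimorphism onto $A$ from maps onto the individual fibers --- is the least justified part of its proof. The cost is that you lean on the mapping-space characterization of algebras under an idempotent map and on filtered-colimit bookkeeping for infinitely many variables, both of which you correctly flag as the laborious points; note also that both arguments share the unexamined assumption that a $\pi_0$-epimorphism from a polynomial algebra on a \emph{discrete} set of generators exists (in the paper this is hidden in the claim that each $A\otimes_B C_i$ admits an EFC presentation).
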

\begin{proof}
	Let \(\{B\to C_i\}_{i\in I}\) be a collection of EFC points of \(B\) such that the induced base change functors on liquid module categories are jointly conservative. Since \(A\) is relative EFC over \(B\), for each \(i\in I\) there exists a presentation of \(A\otimes_B C_i\) as a quotient of a free liquid EFC algebra over \(C_i\):
	\[
		C_i\otimes_{\Liq_p}\C\{t_j\}_{j\in S_i}\to A\otimes_B C_i.
	\]
	Taking the product over all \(i\in I\) we get a morphism
	\[
		\bigotimes_{i\in I}^B \left(C_i\otimes_{\Liq_p}\C\{t_j\}_{j\in S_i}\right)\to \prod_{i\in I} (A\otimes_B C_i).
	\]
	Since the base change functors are jointly conservative, the above morphism gives a \(\pi_0\)-epimorphism
	\[
		B\otimes_{\Liq_p}\C\{t_j\}_{j\in S} \to A,
	\]
	where \(S=\bigsqcup_{i\in I} S_i.\) This gives the desired presentation of \(A\) as a quotient of a free liquid EFC algebra over \(B.\)
\end{proof}
We make the following somewhat bold conjecture.
\begin{conjecture}
	Every liquid commutative algebra has enough EFC points. Consequently, every relative liquid EFC algebra is a concretely relative EFC algebra.
\end{conjecture}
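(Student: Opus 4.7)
The plan is to separate the two assertions: the second follows from the first by the preceding proposition, so the entire content lies in showing that every liquid commutative algebra $B$ has enough EFC points. I would first look for a natural pool of candidate EFC points. The simplest such are $\mathbb{C}$-points: the field $\mathbb{C}$ is a liquid EFC algebra (the $n=0$ instance of \Cref{def:absolute_EFC}, since $\mathbb{C}\{\varnothing\}=\mathbb{C}=\mathbb{C}[\varnothing]$), so every map $B\to\mathbb{C}$ in $\CAlg(\Liq_p(\mathbb{C}))$ is an EFC point. More generally, I would include the global sections $\mathcal{O}(S)$ of globally finitely presented Stein spaces, together with their finite products and quotients; the lemma already proved tells us that these are liquid EFC.

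Next, I would attempt to establish joint conservativity of the base-change functors. Concretely, given a nonzero liquid $B$-module $M$, the goal is to produce an EFC point $B\to C$ with $M\otimes_B C\neq 0$. A natural route is to (i) reduce to the case where $M$ is concentrated in a single homotopy degree, (ii) pass to $\pi_0(B)$ and find a prime/maximal ideal $\mathfrak{m}\subset\pi_0(B)$ in the support of $\pi_0(M)$, and (iii) upgrade the classical residue map $\pi_0(B)\to\pi_0(B)/\mathfrak{m}$ to a liquid EFC point. When the residue is $\mathbb{C}$, one gets an honest $\mathbb{C}$-point and nonvanishing of the residue module follows by standard commutative algebra; otherwise, one would try to embed the residue ring into some $\mathcal{O}(S)$ or into an algebra of holomorphic functions on a geometric object built from the liquid spectrum of $B$. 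Finally, one would combine the resulting family of maps $\{B\to C_i\}$ and verify conservativity by chasing a presumptive nonzero module through each base change.

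The main obstacle, and the reason the authors flag this as a \emph{bold} conjecture, is step (iii): an arbitrary liquid commutative $\mathbb{C}$-algebra need not admit \emph{any} $\mathbb{C}$-valued points. One can cook up condensed field extensions of $\mathbb{C}$, quotients whose underlying static ring has only residue fields of large transcendence degree, or more exotically the liquid analytic rings $\mathbb{C}\llangle T\rrangle$-like objects whose maximal spectra carry no structure of a globally finitely presented Stein space. In such cases there is no a priori reason for the classical residue ring to underlie a liquid EFC algebra, nor for the map $B\to(\text{residue})$ to be continuous/liquid. Circumventing this requires a genuine theory of the \emph{liquid spectrum} of a liquid commutative algebra --- essentially an analytic Gelfand--Nullstellensatz in the Clausen--Scholze setting --- and building such a theory, rather than any particular diagram chase, will be the heart of the argument. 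A reasonable intermediate target, sufficient for the Tannaka duality application in the rest of the paper, is to verify the conjecture for the specific class of liquid algebras arising as derived tensor products of $\mathcal{O}(S)$'s; here one can work directly with the finitely generated EFC structure and avoid the abstract spectral theory altogether.
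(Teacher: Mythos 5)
This statement is stated in the paper as an explicit \emph{conjecture} (``We make the following somewhat bold conjecture''), and the authors supply no proof of it; so there is no argument of theirs to compare yours against. Your proposal, read on its own terms, is not a proof either: it is a plan whose essential step you yourself identify as open. The reduction of the second assertion to the first via the preceding proposition is correct, and the observation that $\C$ and the algebras $\O(S)$ of globally finitely presented Stein spaces furnish a natural pool of candidate EFC points is reasonable. But the argument never establishes joint conservativity for any family of such points for a \emph{general} liquid commutative algebra $B$, and your step (iii) is exactly where it breaks: an arbitrary object of $\CAlg(\Liq_p(\C))$ need not admit any map to a liquid EFC algebra at all, and even when $\pi_0(B)$ has classical maximal ideals with residue field $\C$, the quotient map need not be a morphism of \emph{liquid} algebras, nor does nonvanishing of $\pi_0(M)/\mf{m}$ in ordinary commutative algebra control the \emph{liquid} (derived, completed) base change $M\otimes_B C$, which is computed with the $p$-liquid tensor product rather than the algebraic one. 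Your appeal to ``standard commutative algebra'' in step (iii) therefore does not transfer, and the missing ingredient --- a spectral theory or Nullstellensatz for liquid commutative algebras --- is precisely the content of the conjecture, not a tool available for proving it.

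Your closing suggestion, to verify the statement only for the liquid algebras actually arising in the Tannaka duality argument (tensor products of algebras of the form $\O(S)$), is sensible but is a strictly weaker assertion than the conjecture; note also that the paper's main theorem is arranged so as not to depend on the conjecture (the relevant algebra $\A$ is shown to be concretely EFC directly, via the $\pi_0$-epimorphism $\O_S\{x_1,\ldots,x_n\}\twoheadrightarrow\A$ extracted from faithful flatness), so even that weaker verification is not needed for the rest of the paper. In short: the statement remains a conjecture, and your proposal correctly diagnoses why but does not close the gap.
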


\subsection{Nuclear spaces and nuclear modules}\label{subsec:nuclear_spaces_modules}
This section is not essential for the proof of the main result; however, we include it here to make a later comment that the main theorem holds if everything is restricted to smaller categories of nuclear objects. In particular, this demonstrates that our results are independent of the choice of parameter \(p\) made in the beginning.
\begin{definition}
	Let \(\Cc\) be a monoidal category. Then we have the following definitions: 
	\begin{enumerate}[(i)]
		\item A morphism \(f:x\to y\) is \emph{trace-class} if it lies in the image of the following canonical map.
	\[
		(x^\vee\otimes y)(*)\to \Hom_\Cc(x,y).
	\]
		\item An object \(c\in \Cc\) is \emph{nuclear} if for all compact objects \(c\in \Cc\) the following natural map is an equivalence.
	\[
		(c^\vee\otimes x)(*)\to \Hom_\Cc(c,x).
	\]
		\item An object \(x \in \Cc\) is \emph{basic nuclear} if it is isomorphic to the colimit of a sequence \(x_0 \to x_1 \to \ldots\) of trace-class maps between objects of \(\Cc\).
	\end{enumerate}
\end{definition}
For applications to quasicoherent sheaves, we will need the following categorical notion.
\begin{definition}[{\cite[I.9.1.2]{GaRo1}}]
	Let $\mathcal{A}$ be a monoidal category. We shall say that $\mathcal{A}$ is \emph{rigid} if the following conditions hold:
    \begin{enumerate}[(i)]
        \item The unit object $\mathbf{1}_\mathcal{A} \in \mathcal{A}$ is compact;
        \item The right adjoint of the multiplication functor $\text{mult}_\mathcal{A}$, denoted $(\text{mult}_\mathcal{A})^R$, is continuous;
        \item The functor $(\text{mult}_\mathcal{A})^R: \mathcal{A} \to \mathcal{A} \otimes \mathcal{A}$ is a functor of $\mathcal{A}$-bimodule categories (a priori it is only a right-lax functor).
    \end{enumerate}
\end{definition}
Below, we summarize several key properties of the category of complex nuclear vector spaces. In particular, we explain why the category of nuclear spaces is rigid. This is essential for the applications to Tannaka duality since it allows us to commute pullback functors with actions by nuclear objects.
\begin{proposition}
	The category \(\Nuc(\C)\) of nuclear objects in \(\Liq_p(\C)\) satisfies the following properties.
	\begin{enumerate}[(i)]
		\item The category \(\Nuc(\C)\) is a full stable subcategory of \(\Liq_p(\C)\).
		\item It is independent of the choice of \(p\). I.e. the categories of nuclear objects in \(\Liq_p(\C)\) are canonically equivalent for all \(p\). 
		\item The category \(\Nuc(\C)\) is rigid.
	\end{enumerate}
\end{proposition}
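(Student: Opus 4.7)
The plan is to treat the three assertions sequentially, in increasing order of difficulty. For (i), fullness holds by definition, so the content is stability. For any fixed compact $c \in \Liq_p(\C)$, both $x \mapsto (c^\vee \otimes x)(*)$ and $x \mapsto \Hom_{\Liq_p(\C)}(c,x)$ are exact functors to spectra: the first because the liquid tensor product preserves colimits in each variable and evaluation at the point is exact on condensed spectra, the second by the defining property of compactness. The subcategory of nuclear objects is then the equalizer over all compact $c$ of the natural transformation between two such exact functors, so it is closed under fibers, cofibers, and shifts, and is therefore stable.

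For (ii), I would appeal directly to the Clausen--Scholze analysis of trace-class maps. A morphism between Banach spaces is trace-class in $\Liq_p(\C)$ if and only if it is trace-class in the classical Grothendieck sense, and this characterization is manifestly independent of $p$. Since basic nuclear objects are by definition sequential colimits along trace-class maps of compact dualizable objects, and since every nuclear object is a retract of a basic nuclear one, the Ind-category of compact nuclears, and therefore $\Nuc(\C)$ itself, is canonically the same for every $p \in (0,1]$.

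The main obstacle is (iii). I would organize the proof around a presentation of $\Nuc(\C)$ as the Ind-completion of a small symmetric monoidal category of dualizable objects --- essentially finite-dimensional $\C$-vector spaces together with their trace-class transitions --- and then invoke the general criterion, due to Efimov and used throughout the work of Clausen--Scholze, that a presentably symmetric monoidal $\infty$-category generated under filtered colimits of trace-class maps by compact dualizable objects is automatically rigid. Compactness of the unit $\C$ is then immediate, because $\Hom_{\Nuc(\C)}(\C, -)$ is the underlying-spectrum functor, which commutes with the filtered colimits of trace-class maps by which nuclear objects are assembled. The genuinely subtle step, which I expect to require the most care, is verifying that the right adjoint $(\mathrm{mult})^R$ of the tensor product remains colimit-preserving after restricting from $\Liq_p(\C)$ (where this typically fails) to $\Nuc(\C)$; this becomes available precisely because every transition in the generating small category is trace-class and therefore behaves well under dualization. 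The $\Nuc(\C)$-bimodule structure on $(\mathrm{mult})^R$ then follows formally from rigidity of the generators by the usual abstract argument.
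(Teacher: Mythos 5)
The paper's own proof of this proposition is purely by citation: (i) and (ii) are quoted from \cite[Theorem 8.6.(1)]{Clausen_Scholze_complex} and \cite[Corollary 8.18]{Clausen_Scholze_complex}, and (iii) is deduced from \cite[Corollary 4.57]{Ramzi} together with the fact that basic nuclear objects generate \(\Nuc(\C)\) under colimits. Your part (i) is a correct direct argument (the nuclearity condition asks that a natural map between two exact functors of the variable object be an equivalence, and such a condition is closed under fibers, cofibers and shifts), and your high-level strategy for (ii) and (iii) --- \(p\)-independence of trace-class maps, and rigidity via the Efimov--Ramzi criterion of generation by trace-class colimits plus compactness of the unit --- is exactly the content of the results the paper cites.

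However, your proof of (iii) (and the last step of (ii)) rests on a false presentation of \(\Nuc(\C)\). You describe it as the Ind-completion of a small symmetric monoidal category of dualizable objects, ``essentially finite-dimensional \(\C\)-vector spaces together with their trace-class transitions.'' The Ind-completion of finite-dimensional \(\C\)-vector spaces is just (the derived category of) discrete \(\C\)-vector spaces; if that were \(\Nuc(\C)\), the category would contain none of the nuclear Fr\'echet algebras such as \(\O(\C^n)\) on which the entire paper depends. The correct statement, which is what the paper's citation to \cite[Theorem 8.6.(2)]{Clausen_Scholze_complex} supplies, is that \(\Nuc(\C)\) is generated under colimits by \emph{basic nuclear} objects, i.e.\ sequential colimits along trace-class maps between \emph{arbitrary} objects of \(\Liq_p(\C)\) (typically Banach spaces), and these intermediate objects are in general neither compact nor dualizable; likewise ``every nuclear object is a retract of a basic nuclear one'' should be replaced by generation under (filtered) colimits. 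The compact dualizable nuclear objects alone are far too few to generate. Once the generators are corrected, the rest of your argument goes through: trace-class maps between Banach spaces are \(p\)-independent, giving (ii), and Ramzi's criterion applied to a category generated by colimits of trace-class maps, together with compactness of the unit \(\C\) (which you argue correctly), gives local rigidity and hence rigidity; the colimit-preservation of \((\mathrm{mult})^R\) is an output of that criterion rather than a separate verification.
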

\begin{proof}
	Below, we indicate the references for each of the properties.
	\begin{enumerate}[(i)]
		\item This is the content of \cite[Theorem 8.6.(1)]{Clausen_Scholze_complex}.
		\item This is the content of \cite[Corollary 8.18]{Clausen_Scholze_complex}.
		\item This follows from \cite[Corollary 4.57]{Ramzi} by considering basic nuclear objects and using the fact that those generate the category of nuclear objects under colimits by \cite[Theorem 8.6.(2)]{Clausen_Scholze_complex}.\qedhere
	\end{enumerate}
\end{proof}
\begin{corollary}\label{cor:lax_module}
	Any lax-module functor of \(\Nuc\)-module categories is strict.
\end{corollary}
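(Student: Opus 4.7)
The plan is to deduce this corollary as a direct application of a general principle in higher algebra: for a rigid monoidal presentable $\infty$-category $\mathcal{A}$, the forgetful functor from the $\infty$-category of strict $\mathcal{A}$-linear functors to the $\infty$-category of lax $\mathcal{A}$-linear functors (between two fixed $\mathcal{A}$-module categories) is an equivalence. This principle is established in Gaitsgory--Rozenblyum \cite{GaRo1}*{I.9.3} and is a key input in Stefanich's categorified Tannaka duality \cite{Stefanich_Tannaka}. Since the preceding proposition established that $\Nuc(\C)$ is rigid, the corollary will follow immediately upon applying this general principle with $\mathcal{A} = \Nuc(\C)$.

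First I would unpack the definition: a lax $\Nuc$-module functor $F : M \to N$ between $\Nuc$-module categories consists of a functor $F$ together with a natural transformation
\[
\alpha_{n,m} : n \otimes F(m) \to F(n \otimes m)
\]
satisfying the evident coherence axioms with respect to the unit and associativity constraints. The content of the corollary is that each $\alpha_{n,m}$ is an equivalence.

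The second step is to invoke rigidity. By definition rigidity provides a right adjoint $\mathrm{mult}^R : \Nuc \to \Nuc \otimes \Nuc$ to the multiplication functor which is continuous and is itself a $\Nuc$-bimodule functor. Combined with compactness of the unit, this implies that every object of $\Nuc$ is dualizable in a suitably categorified sense, which in turn provides a Frobenius-type comparison expressing the action of $\Nuc$ on $M$ and $N$ through internal mapping objects. One can then construct an explicit inverse to $\alpha_{n,m}$ by adjoining through the dual $n^\vee$, using the counit of duality to contract the resulting $n^\vee \otimes n$ factor; the coherence axioms of the lax structure together with the bimodule structure on $\mathrm{mult}^R$ ensure that the constructed map is a genuine two-sided inverse of $\alpha_{n,m}$.

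The main obstacle is purely bookkeeping: verifying that the inverse produced via rigidity really coincides with the formal adjoint structure map, and tracking the coherences in the $\infty$-categorical setting. In practice, rather than redoing this verification, I would simply cite \cite{GaRo1}*{I.9.3.3}, whose hypotheses are precisely the three conditions defining rigidity that we have already checked for $\Nuc(\C)$. The only other thing to note is that an $\Nuc$-module category in the sense relevant here is a presentable stable $\infty$-category tensored over $\Nuc(\C)$ with continuous action, which is automatic for the categories arising from $\QCoh$ in later sections.
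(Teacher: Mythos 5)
Your proposal is correct and takes essentially the same route as the paper: the paper's entire proof is to note that this is a basic property of rigid monoidal categories and to cite Gaitsgory--Rozenblyum (the paper points to \cite[Lemma 9.3.6]{GaRo1}), exactly as you do after checking that the hypothesis of rigidity was established in the preceding proposition. Your additional sketch of how the inverse to the lax structure map is produced is a reasonable gloss but is not needed, since both you and the paper ultimately defer the verification to the cited reference.
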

\begin{proof}
	This is a basic property of rigid categories. See \cite[Lemma 9.3.6]{GaRo1} for a proof. 
\end{proof}
\begin{definition}\label{def:nuclear_algebra}
    A commutative algebra object in \(\Nuc(\C)\) is called a \emph{nuclear algebra}.
\end{definition}
\begin{lemma}
	The algebra of holomorphic functions on a Stein space is a nuclear liquid algebra.
\end{lemma}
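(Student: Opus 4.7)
The plan is to reduce the claim to the special case of the free algebra $\C\{t_1,\ldots,t_n\}$ of entire holomorphic functions on $\C^n$, and then to transport this along the presentation of $\O(X)$ as a finite colimit of such algebras in the globally finitely presented case, or along a Stein exhaustion more generally. Both steps rely on the structural properties of $\Nuc(\C)$ established in the preceding proposition and in \cite{Clausen_Scholze_complex}: in particular, $\Nuc(\C)$ is a stable full subcategory of $\Liq_p(\C)$ and is closed under the symmetric monoidal structure, so closed under finite colimits of commutative algebra objects.

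For the base case $\O(\C^n) = \C\{t_1,\ldots,t_n\}$, I would use \Cref{con:liquid_struct_on_EF} to observe that the liquid structure is the one induced by the standard nuclear Fr\'echet topology, realised as the limit of the Banach algebras $\O(\ol{D(0,r)^n})$ of continuous functions on the closed polydisc that are holomorphic on the interior, as $r\to \infty$. For $r<R$, the restriction map $\O(\ol{D(0,R)^n}) \to \O(\ol{D(0,r)^n})$ is classically trace-class: any such holomorphic function is the uniform limit on the smaller polydisc of its Taylor polynomials, with geometric decay in $r/R$ of the tails, exhibiting the transition as a convergent sum of rank-one operators. Under the Clausen--Scholze identification of classical nuclear Fr\'echet spaces with basic nuclear objects of $\Liq_p(\C)$ (see \cite[Lecture 8]{Clausen_Scholze_complex}), $\C\{t_1,\ldots,t_n\}$ is therefore nuclear, and its commutative algebra structure in $\Nuc(\C)$ is inherited directly from \Cref{con:liquid_struct_on_EF}.

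If $X$ is globally finitely presented then $\O(X)$ is by definition a finite colimit in $\CAlg(\Liq_p(\C))$ of algebras of the form $\C\{t_1,\ldots,t_n\}$, and the closure properties of $\Nuc(\C)$ just recalled give nuclearity for free. For an arbitrary Stein space $X$, I would instead use a Stein exhaustion $X = \bigcup_n X_n$ by relatively compact Stein opens and invoke Runge-type approximation to ensure that the transition maps $\O(X_{n+1}) \to \O(X_n)$ are nuclear, realising $\O(X)$ as a nuclear Fr\'echet algebra and again appealing to the Clausen--Scholze dictionary. The principal obstacle, and the only nontrivial input, is this passage from the classical nuclear Fr\'echet picture to the liquid one; granted that identification, the remainder of the argument is a formal application of the closure properties of $\Nuc(\C)$ inside $\Liq_p(\C)$.
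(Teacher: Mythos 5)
Your argument is correct and takes essentially the same route as the paper's one-line proof: verify nuclearity for the free algebras $\C\{t_1,\ldots,t_n\}$ and then propagate it through the colimit presentation of $\O(X)$ using closure of nuclear liquid algebras under colimits. One caveat worth fixing: a nuclear Fr\'echet space such as $\C\{t_1,\ldots,t_n\}$ is a sequential \emph{limit} along trace-class maps, hence nuclear but not \emph{basic} nuclear in the sense of the paper's definition (which asks for a \emph{colimit} of trace-class maps), so the Clausen--Scholze input you want is that countable limits of Banach spaces along trace-class transition maps are nuclear.
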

\begin{proof}
	This is immediate from the fact that the algebra of holomorphic functions on the affine space \(\mathbb{A}^n\) is a nuclear liquid algebra and the fact that the category of nuclear liquid algebras is closed under colimits. 
\end{proof}
\subsection{Comparison between animated EFC algebras and liquid EFC algebras}\label{subsec:comparison_animated_liquid}
\begin{notation}
	We use the following notation throughout the rest of this section to distinguish between the two notions of EFC algebras.
	\begin{itemize}
		\item We denote by \(\O\{t_1,\ldots,t_n\}\) the free animated EFC algebra on \(n\) generators.
		\item We denote by \(\C\{t_1,\ldots,t_n\}\) the free liquid EFC algebra on \(n\) generators.
	\end{itemize}
\end{notation}
\begin{theorem}\label{thm:fully_faithfullness_of_EFC}
	There is an equivalence of monoidal \(\infty\)-categories between the category of EFC algebras in anima and the category of liquid EFC algebras.
\end{theorem}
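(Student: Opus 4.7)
The plan is to identify both sides as the animation of the Lawvere theory $\O^{\op}$ and extract the equivalence from the universal property of animations. By construction, the category $\EFC(\Spc)$ of finite-product-preserving functors $\O \to \Spc$ is the free sifted cocompletion of $\O^{\op}$, with $\C^n$ corresponding to the free animated EFC algebra $\O\{t_1,\ldots,t_n\}$ via the Yoneda embedding. First I would define a functor $F : \O^{\op} \to \CAlg(\Liq_p(\C))$ by $\C^n \mapsto \C\{t_1,\ldots,t_n\}$, sending a morphism $(f_1,\ldots,f_n) \in \O(\C^m)^n$ to the unique liquid algebra map $\C\{t_1,\ldots,t_n\} \to \C\{t_1,\ldots,t_m\}$ with $t_i \mapsto f_i$; existence and uniqueness come directly from the defining extension property of \Cref{def:absolute_EFC} applied to the target. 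The universal property then extends $F$ uniquely to a sifted-colimit-preserving functor $\Phi : \EFC(\Spc) \to \CAlg(\Liq_p(\C))$.

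The next step is to verify that $\Phi$ factors through liquid EFC algebras and to construct its inverse. Closure of liquid EFC algebras under sifted colimits in $\CAlg(\Liq_p(\C))$ follows from the observation that on the liquid EFC subcategory, the functor $\Hom(\C\{t_1,\ldots,t_n\}, -)$ coincides tautologically with $\Hom(\C[t_1,\ldots,t_n], -) = (-)^n$, which preserves sifted colimits since these commute with finite products in $\Spc$. For the inverse I would set $\Psi(A)(\C^n) := \Hom_{\CAlg(\Liq_p(\C))}(\C[t_1,\ldots,t_n], A)$, with $\O$-functoriality induced by the extension property: any morphism $g : \C^n \to \C^m$ in $\O$ determines a liquid algebra map $\C\{t_1,\ldots,t_m\} \to \C\{t_1,\ldots,t_n\}$, and composition with this map implements the $\O$-action on $\Psi(A)$. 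A direct computation gives $\Psi \circ \Phi \simeq \id$ on the free generators, and since both $\Phi$ and $\Psi$ preserve sifted colimits, the identification propagates to all of $\EFC(\Spc)$.

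For the reverse composition $\Phi \circ \Psi \simeq \id$, I would invoke a Barr--Beck--Lurie monadicity argument on a suitable forgetful functor from liquid EFC algebras to $\Spc$, identifying the induced monad with the classical EFC Lawvere monad; the concrete presentations supplied by the ``concretely relative EFC algebras'' of \Cref{subsec:definition_basic_properties} provide the constructive version of the required sifted-colimit resolutions. The monoidal comparison is then automatic: both $\EFC(\Spc)$ and the category of liquid EFC algebras carry coCartesian symmetric monoidal structures (tensor product of commutative algebras coincides with coproduct), and the key coproduct identification $\C\{t_1,\ldots,t_n\} \otimes^{\text{liq}} \C\{t_1,\ldots,t_m\} \simeq \C\{t_1,\ldots,t_{n+m}\}$ follows from the flatness and idempotency results of \Cref{subsec:liquid_structure_holomorphic}. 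The main obstacle will be the monadicity/conservativity hypothesis: one needs the forgetful functor from liquid EFC algebras to $\Spc$ to reflect equivalences, i.e., the liquid enhancement of a liquid EFC algebra must be determined by its underlying system of $n$-ary entire-functional operations. This is a rigidity statement special to the liquid setting with no counterpart in classical Lawvere theory, and its careful verification will be the main technical burden of the proof.
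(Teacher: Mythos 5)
Your overall architecture (extend $\C^n\mapsto\C\{t_1,\ldots,t_n\}$ along the free sifted cocompletion of the Lawvere theory $\O^{\op}$, then compare with the functor $A\mapsto\Hom_{\CAlg(\Liq_p(\C))}(\C[t_1,\ldots,t_n],A)$) matches the first half of the paper's argument, and your monoidal comparison via coproducts of free algebras is consistent with what the paper uses. But the proposal has a genuine gap exactly where you flag ``the main technical burden'': the composite $\Phi\circ\Psi\simeq\id$ is made to rest on Barr--Beck--Lurie monadicity of a forgetful functor from liquid EFC algebras to $\Spc$, and the required conservativity is never established. This is not a routine verification: the functor $\CAlg(\Liq_p(\C))\to\Spc$, $A\mapsto\Hom(\C[t],A)$, is emphatically \emph{not} conservative (the condensed/liquid structure is extra data invisible to the underlying anima), so the claim that it becomes conservative after restriction to liquid EFC algebras is essentially the rigidity content of the theorem itself --- it is the liquid analogue of uniqueness-of-topology results for Stein algebras, not a formal Lawvere-theoretic fact. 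Deferring it to monadicity therefore makes the argument circular, or at best reduces the theorem to a statement of comparable difficulty.

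The paper avoids this entirely by never forgetting down to $\Spc$: it proves full faithfulness of the functor into $\CAlg(\Liq_p(\C))$ directly, using that $\C\{t\}$ is \emph{idempotent} over $\C[t]$ in $\CAlg(\Liq_p(\C))$. Idempotency makes the restriction map $\Hom_{\CAlg(\Liq_p(\C))}(\C\{t\},B)\to\Hom_{\CAlg(\Liq_p(\C))}(\C[t],B)$ a homotopy monomorphism for \emph{every} liquid algebra $B$; surjectivity on $\pi_0$ is then extracted from a presentation of $B$ as a sifted colimit of free EFC algebras, since the composite with the equivalence $\Hom_{\EFC}(\O\{t\},B)\simeq\colim_i\Hom_{\EFC}(\O\{t\},\O\{t_1,\ldots,t_{n_i}\})$ forces the map to hit every component. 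The same device repairs a second, smaller gap in your proposal: your argument that liquid EFC algebras are closed under sifted colimits only identifies $\colim_i\Hom(\C\{t\},A_i)$ with $\Hom(\C[t],\colim_i A_i)$, but says nothing about $\Hom(\C\{t\},\colim_i A_i)$, because $\C\{t\}$ is not compact in $\CAlg(\Liq_p(\C))$; the monomorphism supplied by idempotency is exactly what closes that comparison. I would replace the monadicity step by this mono-plus-$\pi_0$-epi argument.
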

The following argument is essentially due to Ben-Bassat--Kelly--Kremnizer in \cite[Lemma 4.3.36]{BBKK_Perspective}. Another route one could, and probably should, take to deduce this result is to show that the category of liquid vector spaces with the liquid tensor product forms a derived geometry context in the sense of \cite{BBKK_Perspective}, and then apply \cite[Lemma 4.3.36]{BBKK_Perspective} directly. However, we think that the proof is somewhat illuminating, and thus we include it here.
\begin{proof}
	We use the fact that free EFC algebras are idempotent in the category of liquid commutative algebras over the respective free polynomial algebras.

	Let \(A,B\) be two EFC algebras. We may assume that \(A\) is free on \(1\) generator since any \(A\) can be presented as a sifted colimit of such algebras. In addition, \(B\) also admits a presentation as a sifted colimit of free EFC algebras, i.e. \(B\simeq \colim_{i\in \I}\O\{t_1,\ldots,t_{n_i}\}\). Hence, we have the following map.
	\begin{gather*}
		\bigg[\Hom_{\EFC}(\O\{t\},B)\simeq \Hom_{\EFC}(\O\{t\}, \colim_{i\in\I}\O\{t_1,\ldots,t_{n_i}\})\simeq\\\simeq\colim_{i\in\I} \Hom_{\EFC}(\O\{t\}, \O\{t_1,\ldots,t_{n_i}\})\bigg]\to\\ \to \Hom_{\CAlg(\Liq_p(\C))}(\C\{t\},\colim_{i\in\I}\C\{t_1,\ldots,t_{n_i}\})\to\\\to \Hom_{\CAlg(\Liq_p(\C))}(\C[t], \colim_{i\in \I}\C\{t_1,\ldots,t_{n_i}\})\simeq\\\simeq \colim_{i\in\I}\Hom_{\CAlg(\Liq_p(\C))}(\C[t], \C\{t_1,\ldots,t_{n_i}\}).
	\end{gather*}
	By freeness, the composite map is an equivalence. Consequently, the map \[\Hom_{\CAlg(\Liq_p(\C))}(\C\{t\},B)\to \Hom_{\CAlg(\Liq_p(\C))}(\C[t],B)\] is a \(\pi_0\)-epimorphism (since an equivalence has this map as the second factor). However, this map is also a homotopy monomorphism because \(\C\{t\}\) is idempotent over \(\C[t]\). Thus we have established the desired equivalence for free EFC algebras on one generator as the first argument. The general case follows by taking an appropriate sifted colimit.
\end{proof}
\begin{corollary}\label{cor:finitely_generated_embed_fully_faithfully}
	The category of finitely generated EFC algebras embeds fully faithfully into the category of nuclear algebras in \(Liq_p(\C).\) 
\end{corollary}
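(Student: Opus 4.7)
The plan is to deduce the corollary directly from \Cref{thm:fully_faithfullness_of_EFC} together with the fact already established that the algebra of holomorphic functions on a globally finitely presented Stein space is a nuclear liquid algebra. The whole point is that once we know the underlying liquid commutative algebra of a finitely generated EFC algebra lies in $\CAlg(\Nuc(\C))$, fully faithfulness comes for free, because $\Nuc(\C)\hookrightarrow \Liq_p(\C)$ is a full subcategory and hence so is $\CAlg(\Nuc(\C))\hookrightarrow \CAlg(\Liq_p(\C))$.

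Concretely, I would proceed as follows. First I would invoke \Cref{thm:fully_faithfullness_of_EFC} to get the fully faithful embedding of animated EFC algebras into $\CAlg(\Liq_p(\C))$ sending $\O\{t_1,\dots,t_n\}$ to $\C\{t_1,\dots,t_n\}$. Restricting to the subcategory of finitely generated EFC algebras, every object is a finite sifted colimit of free objects $\O\{t_1,\dots,t_n\}$, and under the embedding it is sent to the corresponding sifted colimit of free liquid EFC algebras $\C\{t_1,\dots,t_n\}$. By the preceding lemma on holomorphic functions on Stein spaces, each such $\C\{t_1,\dots,t_n\}$ is a nuclear liquid algebra; and by the proposition on the structure of $\Nuc(\C)$, nuclear objects generate $\Nuc(\C)\subseteq \Liq_p(\C)$ under colimits, so in particular the full subcategory of nuclear objects is closed under the sifted colimits needed to realize a general finitely generated EFC algebra from free ones. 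Hence the image of the embedding lies inside $\CAlg(\Nuc(\C))$.

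Combining these two steps, the embedding factors as
\[
	\EFC(\Spc)^{\fg}\hookrightarrow \CAlg(\Nuc(\C))\hookrightarrow \CAlg(\Liq_p(\C)),
\]
and since the composition is fully faithful and the second arrow is the inclusion of a full subcategory, the first arrow is fully faithful as well. The only point that might require care is the stability of nuclearity under the colimits used to build general finitely generated EFC algebras from the free ones; but this is precisely what the proposition on $\Nuc(\C)$ together with \cite[Theorem 8.6.(2)]{Clausen_Scholze_complex} provides, so no additional argument is needed.
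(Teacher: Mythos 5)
Your argument is essentially the paper's own proof: it combines \Cref{thm:fully_faithfullness_of_EFC} with the observation that the liquid algebra underlying a finitely generated EFC algebra is nuclear, because the free algebras $\C\{t_1,\dots,t_n\}$ are nuclear and nuclear algebras are closed under the colimits ($\aleph_1$-filtered colimits and geometric realizations) needed to build the general case. The one caveat is your justification of that closure: the statement that basic nuclear objects \emph{generate} $\Nuc(\C)$ under colimits is not the fact you need (generation does not imply closure under colimits computed in $\Liq_p(\C)$); you should instead directly invoke the closure of nuclear objects under these colimits, which is what the paper does.
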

\begin{proof}
	The proof follows from \Cref{thm:fully_faithfullness_of_EFC} combined with the fact that the category of nuclear algebras (see \Cref{def:nuclear_algebra}) is closed under \(\aleph_1\)-filtered colimits and geometric realizations.
\end{proof}
\begin{remark}
	The category of animated EFC algebras is defined independently of the category of \(p\)-liquid vector spaces. Thus its essential image in \(\CAlg(\Liq_p(\C))\) is also \(p\)-independent. In the case of finitely generated EFC algebras, this also follows from \Cref{cor:finitely_generated_embed_fully_faithfully}.
\end{remark}
\subsection{EFC cotangent complex and liquid cotangent complex}\label{subsec:efc_cotangent_complex}
In this section, we review the equivalence between liquid and EFC infinitesimal theories.
\begin{definition}
	Let \(A\) be a liquid algebra. We define its cotangent complex \(\LL_A\) as the object corepresenting the functor of derivations from \(A\) to liquid \(A\)-modules. We denote this liquid \(A\)-module by \(\LL_A^{\Liq}.\)

	Similarly, given a morphism of liquid algebras \(A\to B\) we define the relative cotangent complex \(\LL_{B/A}^{\Liq}\) as the object corepresenting the functor of \(A\)-linear derivations from \(B\) to a liquid \(B\)-module that is an \(A\)-module via the given morphism \(A\to B.\)
\end{definition}
\begin{proposition}
	Let \(A\) be a liquid finitely presented EFC algebra. Then the underlying liquid \(A\)-module of the EFC cotangent complex \(\LL_A^{\EFC}\) is equivalent to the liquid cotangent complex \(\LL_A^{\Liq}\) of \(A\) computed in \(\Liq_p(A).\)
\end{proposition}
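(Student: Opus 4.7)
The plan is to show that the canonical forgetful functor from the EFC module category $\Mod_A^{\EFC} = \Sp(\EFC(\Spc)_{/A})$ to the liquid module category $\Mod_A^{\Liq}$ is an equivalence, and that under this equivalence the EFC cotangent complex corresponds to the liquid cotangent complex. The forgetful functor exists because, by \Cref{thm:fully_faithfullness_of_EFC}, the inclusion $\EFC(\Spc) \hookrightarrow \CAlg(\Liq_p(\C))$ induces a functor on stabilized slice categories, which at the level of modules simply forgets the extra EFC structure. Granting the equivalence at the module level, both $\LL_A^{\EFC}$ and $\LL_A^{\Liq}$ are identified as corepresenting objects of their respective derivation functors, and the resulting universal property argument matches them up.

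The key technical input is the following \emph{square-zero extension lemma}: for a finitely presented liquid EFC algebra $A$ and any liquid $A$-module $M$, the split square-zero extension $A \oplus M$ is itself a liquid EFC algebra. Granting this, the assignment $M \mapsto A \oplus M$ promotes every liquid $A$-module into an object of $\EFC_{/A}$, and the induced functor on stabilizations is inverse to the forgetful functor, yielding $\Mod_A^{\EFC} \simeq \Mod_A^{\Liq}$.

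To prove the square-zero extension lemma, one unwinds the EFC condition for $A \oplus M$: for each $n \geq 0$, the restriction map
\[
    \Hom_{\CAlg(\Liq_p(\C))}(\C\{s_1,\ldots,s_n\}, A\oplus M) \longrightarrow \Hom_{\CAlg(\Liq_p(\C))}(\C[s_1,\ldots,s_n], A\oplus M)
\]
must be an equivalence. This map is automatically a monomorphism because $\C\{s_1,\ldots,s_n\}$ is idempotent over $\C[s_1,\ldots,s_n]$ in $\CAlg(\Liq_p(\C))$. For surjectivity, given any $n$-tuple $(a_i + m_i)_{i=1}^n \in (A\oplus M)^n$, one constructs the required extension by the Taylor formula
\[
    \vp(f) := f(a_1,\ldots,a_n) + \sum_{i=1}^n \frac{\partial f}{\partial s_i}(a_1,\ldots,a_n)\cdot m_i, \qquad f \in \C\{s_1,\ldots,s_n\}.
\]
The first summand makes sense by the EFC structure on $A$; the partial derivative operators are liquid endomorphisms of the nuclear Fr\'echet algebra $\C\{s_1,\ldots,s_n\}$; and the multiplication by $m_i$ uses the liquid $A$-module structure. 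Multiplicativity of $\vp$ is the classical Leibniz identity once one invokes $m \cdot m' = 0$ in $A \oplus M$.

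The main obstacle is verifying that this Taylor formula really defines a morphism in $\CAlg(\Liq_p(\C))$, i.e.\ that it is a \textbf{liquid} (continuous) algebra map and not merely an abstract algebraic map. This reduces to checking that each of its constituents --- partial differentiation on entire functions, evaluation at an EFC point of $A$, and the $A$-action on $M$ --- is a morphism in $\Liq_p(\C)$, all of which should follow from the nuclear Fr\'echet construction of the liquid structure on entire functions recorded in \Cref{con:liquid_struct_on_EF} together with the definition of liquid module. Once this lemma is in place, the equivalence of module categories is formal, and chasing the identity map $A \to A$ through the stabilizations identifies $\LL_A^{\EFC}$ with $\LL_A^{\Liq}$ as liquid $A$-modules.
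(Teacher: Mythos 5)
Your route is genuinely different from the paper's, and the central claim it rests on --- that the forgetful functor $\Mod_A^{\EFC}=\Sp(\EFC(\Spc)_{/A})\to\Mod_A(\Liq_p(\C))$ is an equivalence --- is false, already for $A=\C=\C\{t\}/(t)$. Spectrum objects in the slice of \emph{animated} EFC algebras are controlled entirely by the underlying algebraic structure: since entire functions satisfy the Hadamard property, a split square-zero extension of $\C$ in $\EFC(\Spc)$ is of the form $\C\oplus V$ with $f\cdot(c,v)=(f(c),f'(c)\,v)$ for an ordinary (animated) $\C$-vector space $V$, so $\Sp(\EFC(\Spc)_{/\C})$ is the ordinary derived category of $\C$-vector spaces. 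This category carries no topology, whereas $\Mod_\C(\Liq_p(\C))=\Liq_p(\C)$ contains Banach spaces, the measure objects $\M_{<p}(S)$, and so on; the forgetful functor is nowhere near essentially surjective. A further warning sign is that your argument never invokes the finite-presentation hypothesis.

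The step that silently breaks is ``the assignment $M\mapsto A\oplus M$ promotes every liquid $A$-module into an object of $\EFC_{/A}$.'' Your Taylor-formula computation does show (for static $M$, and granting the $E_\infty$-coherence you only check at the level of underlying rings) that $A\oplus M$ satisfies the extension property of \Cref{def:absolute_EFC}, i.e.\ is a \emph{liquid} EFC algebra. But to view it as an object of $\EFC(\Spc)_{/A}$ you need $A\oplus M$, \emph{with its given liquid structure}, to lie in the essential image of the comparison functor of \Cref{thm:fully_faithfullness_of_EFC}; the proof of that comparison only produces sifted colimits of the free algebras $\C\{t_1,\ldots,t_n\}$, and by the previous paragraph no such identification can exist for, say, a Banach module $M$. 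So the square-zero observation is a true and useful fact about the full subcategory $\EFC\Liq_p(\C)\subseteq\CAlg(\Liq_p(\C))$, but it does not manufacture an inverse to the forgetful functor. The paper's own proof is much more modest: it verifies the statement for the free algebras $\C\{t_1,\ldots,t_n\}$, where $\LL^{\Liq}$ is computed from the transitivity triangle for $\C[t_1,\ldots,t_n]\to\C\{t_1,\ldots,t_n\}$ (whose relative cotangent complex vanishes by idempotency), giving the free module of rank $n$ on both sides, and then writes a finitely presented $A$ as a finite colimit of free algebras, using that both cotangent complex constructions commute with colimits. If you want to salvage your idea, its correct use is pointwise: it shows that liquid derivations $A\to M$ agree with EFC derivations for those $M$ in the image of the forgetful functor, which compares the two corepresenting objects only after one knows $\LL_A^{\EFC}$ also corepresents liquid derivations on all of $\Mod_A(\Liq_p(\C))$ --- and that again reduces to the free case.
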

\begin{proof}
	It is clear that \(\LL_A^{\EFC}\) is equivalent to the cotangent complex of \(A\) as a liquid algebra when \(A\) is free and finitely generated as an EFC algebra. The general case follows by taking the appropriate finite colimit.
\end{proof}
\begin{notation}
	From now on, we cease to distinguish between animated EFC algebras and liquid EFC algebras since they are equivalent by \Cref{thm:fully_faithfullness_of_EFC}.
\end{notation}
\subsection{Infinitesimal criterion for smoothness}\label{subsec:infinitesimal_criterion} In this section, we prove an infinitesimal criterion for smoothness for \emph{EFC morphisms} of liquid algebras. The idea is that even though EFC morphisms are not of finite type in the usual algebraic sense, they behave like morphisms of finite type \emph{relative} to the theory of holomorphic functions. Thus, one can hope to adapt the classical infinitesimal criteria for smoothness to this setting. Another viewpoint on this is that the inverse function theorem holds for holomorphic functions, and thus one can hope to bootstrap this to the level of EFC algebras by appropriately interpreting the Jacobian condition. 

This section provides a translation of Steffens' results in the \(\CI\)-case \cite[\S 5.1.3]{Steffens_Thesis} to the EFC setting. This easy translation suggests that the results of Steffens actually hold in a more general setting of Fermat theories, see e.g. \cite{Carchedi_Roytenberg}, which we do not pursue here.

\begin{theorem}[Implicit Function Theorem]\label{st:infinitesimal_criterion}
    Let \(X\to Y\) be a morphism of Stein spaces. Then if the module of relative EFC K\"ahler differentials \(\Omega_{X/Y}\) is locally a (liquid) free module of finite rank, the morphism \(X\to Y\) is a smooth submersion. That is, it is locally of the form \(Y\times \C^k\to Y\). 
\end{theorem}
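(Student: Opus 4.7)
My plan is to reduce this to the classical holomorphic implicit function theorem via the first fundamental sequence for the EFC cotangent complex. Since the statement is étale-local on both \(X\) and \(Y\), I may assume \(Y=\Spec B\) and \(X=\Spec A\) are affine globally finitely presented Stein spaces. Using the comparison \Cref{thm:fully_faithfullness_of_EFC}, I can work with the liquid EFC presentation: choose a \(\pi_0\)-surjection \(\varphi:B\{t_1,\ldots,t_n\}\twoheadrightarrow A\) of liquid EFC algebras over \(B\), with kernel generated (after possibly passing to a cofibrant replacement) by elements \(f_1,\ldots,f_m\). Fix a point \(x\in X\) and its image \(y\in Y\); all claims will be proved in an étale neighbourhood of \(x\).

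Next, I invoke the relative EFC cotangent sequence
\[
I/I^2 \xrightarrow{\,d\,} \Omega^{\EFC}_{B\{t\}/B}\otimes_{B\{t\}} A \;\longrightarrow\; \Omega^{\EFC}_{A/B}\;\longrightarrow\; 0,
\]
where \(I=(f_1,\ldots,f_m)\) and \(d(f_i)=\sum_j(\partial f_i/\partial t_j)\,dt_j\). The middle term is free of rank \(n\) over \(A\) by the computation of \(\LL^{\EFC}\) for free EFC algebras. By hypothesis \(\Omega_{A/B}^{\EFC}\) is locally free of rank \(k\), so after shrinking to an étale neighbourhood of \(x\) the image of \(d\) is a direct summand of rank \(n-k\). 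Hence I can reorder indices and select \(f_{i_1},\ldots,f_{i_{n-k}}\) and coordinates \(t_{j_1},\ldots,t_{j_{n-k}}\) such that the square Jacobian submatrix \(\bigl(\partial f_{i_\ell}/\partial t_{j_\ell}\bigr)(x)\) is invertible.

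The geometric heart is then an application of the classical holomorphic inverse function theorem, which is available for holomorphic maps with parameters in the Stein space \(Y\) and which transports to the liquid EFC world via the comparison results of \Cref{subsec:comparison_animated_liquid}. Consider the self-map of \(\Spec B\{t_1,\ldots,t_n\}\) sending \(t_{j_\ell}\mapsto f_{i_\ell}\) for \(\ell=1,\ldots,n-k\) and fixing the remaining \(t_j\). By the invertibility of its Jacobian at \(x\), this map is étale on an open neighbourhood of \(x\); pulling back the coordinate subspace \(\{t_{j_1}=\cdots=t_{j_{n-k}}=0\}\) produces an étale local chart identifying the closed Stein subspace \(\Spec B\{t\}/(f_{i_1},\ldots,f_{i_{n-k}})\) with \(Y\times\C^k\) near \(x\), via the remaining coordinates as parameters on the \(\C^k\) factor.

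The main obstacle is to show that the remaining equations \(f_{i'}\) for \(i'\notin\{i_1,\ldots,i_{n-k}\}\) are automatically trivialised after this étale localisation; this is the step where the EFC structure (rather than merely pointwise information) is essential. The rank-\((n-k)\) condition on the image of \(d\) forces \(df_{i'}\) to lie in the \(A\)-submodule generated by \(df_{i_1},\ldots,df_{i_{n-k}}\) modulo \(I\), so each \(f_{i'}\) belongs to the ideal \((f_{i_1},\ldots,f_{i_{n-k}})\) after further étale localisation. This is an EFC analogue of the smooth Nakayama argument used by Steffens \cite[\S 5.1.3]{Steffens_Thesis} in the \(\CI\)-setting; it goes through here because the implicit function theorem supplies an \emph{honest} holomorphic inverse in the category of liquid EFC algebras, not merely a formal one, thanks to the flatness and idempotency of free EFC algebras over the polynomial algebras. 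Once this is established, the resulting map \(Y\times\C^k\to X\) is both étale and a \(\pi_0\)-isomorphism near \(x\), hence an equivalence of derived Stein spaces in an étale neighbourhood of \(x\), completing the proof.
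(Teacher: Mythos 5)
Your proposal follows the classical constant--rank/implicit--function route: present $\mathcal{O}(X)$ as a quotient of a free EFC algebra over $\mathcal{O}(Y)$, use the conormal sequence to find an invertible Jacobian minor, change coordinates by the holomorphic inverse function theorem so that the chosen equations become coordinates, and then argue the remaining equations are redundant. The paper organizes the argument the other way around: it picks $k$ elements of $\mathcal{O}(X)$ whose differentials trivialize $\Omega_{X/Y}$ (via the Fitting-ideal lemma applied to the surjection onto $\Omega_{X/Y}$), obtains a \emph{formally} \'etale map from the free EFC algebra $\mathcal{O}(Y)\{T_1,\ldots,T_k\}$ to $\mathcal{O}(X)$, and then invokes its \Cref{thm:inverse_function_theorem} (\'etale $\Leftrightarrow$ formally \'etale for finite-type EFC maps) to conclude. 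Your version is more explicit and closer to the differential-geometric proof; the paper's version isolates the analytic content into a single reusable statement.

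The one place your sketch is genuinely thin is the final step: showing that the leftover equations $f_{i'}$ are trivialised after \'etale localisation \emph{and} that the resulting comparison map is an equivalence of \emph{derived} Stein spaces, not merely an isomorphism on $\pi_0$. Knowing that each $f_{i'}$ lies in the ideal $(f_{i_1},\ldots,f_{i_{n-k}})$ on $\pi_0$ does not by itself control the derived quotient (redundant equations generically introduce higher homotopy), so "\'etale and a $\pi_0$-isomorphism, hence an equivalence" needs the \'etaleness to be established at the level of the full relative cotangent complex first. This is exactly the content the paper packages into \Cref{lem:factorization} (the "appropriate derived factorization" using connectivity of the cotangent complex) and \Cref{lem:etale_localization}; your appeal to "an EFC analogue of the smooth Nakayama argument" is pointing at the right place but does not supply it. If you either prove that step directly or route it through \Cref{thm:inverse_function_theorem} as the paper does, the argument closes.
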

\Cref{st:infinitesimal_criterion}  follows from the following EFC version of the inverse function theorem.
\begin{theorem}[Inverse Function Theorem]\label{thm:inverse_function_theorem}
	Let \(A\to B\) be a morphism of EFC algebras of finite type. Then it is \'etale if and only if it is formally \'etale, i.e. the relative cotangent complex \(\LL_{B/A}\) vanishes.
\end{theorem}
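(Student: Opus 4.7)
The ``only if'' direction is standard: étale morphisms are local isomorphisms in the étale topology, and the cotangent complex of an isomorphism vanishes, so \(\LL_{B/A}\) vanishes étale-locally on the source and hence vanishes. The content of the theorem lies in the converse, so I focus on that. The overall strategy is to reduce to a local statement around each closed EFC point and then invoke the classical holomorphic inverse function theorem, in the spirit of Steffens' \(\CI\)-argument \cite[\S 5.1.3]{Steffens_Thesis}.

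First, using finite presentation together with \Cref{thm:fully_faithfullness_of_EFC}, write a presentation
\[
B \simeq A\{t_1,\ldots,t_n\}/(f_1,\ldots,f_m).
\]
The cofiber sequence of cotangent complexes for \(A \to A\{t_1,\ldots,t_n\} \to B\), combined with the identification of EFC and liquid cotangent complexes from \Cref{subsec:efc_cotangent_complex} and the computation \(\LL_{A\{t\}/A}\simeq A\{t\}^{\oplus n}\) generated by \(dt_1,\ldots,dt_n\), yields a presentation
\[
I/I^2 \to B^{\oplus n} \to H^0(\LL_{B/A}) \to 0
\]
in which the left-hand map is induced by the Jacobian \(J = (\partial f_i/\partial t_j) \in M_{m,n}(B)\). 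Vanishing of \(\LL_{B/A}\) forces this map to be a quasi-isomorphism; after shrinking to a localization around a closed point, one may assume \(m=n\) and that \(J\) is an invertible element of \(M_n(B)\).

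Now localize at any closed EFC point \(x\colon B\to \C\). The matrix \(J(x)\in M_n(\C)\) is invertible, so the classical holomorphic inverse function theorem applied to the holomorphic map \((f_1,\ldots,f_n)\colon \C^n \to \C^n\) produces a local biholomorphism near the preimage of \(x\). Translating back, this exhibits the map of Stein spaces dual to \(A \to B\) as a local isomorphism at \(x\), i.e.\ étale at \(x\). The existence of enough EFC points developed in \Cref{subsec:definition_basic_properties} guarantees that checking at every closed point is enough to conclude étaleness globally, which completes the proof.

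The main obstacle is the derived-algebraic step of extracting invertibility of the Jacobian (in particular the matching \(m=n\) after an appropriate localization) from the bare vanishing of the derived cotangent complex \(\LL_{B/A}\). This requires a Nakayama-type argument carried out at each EFC point, together with the identification of EFC and liquid cotangent complexes from \Cref{subsec:efc_cotangent_complex} and careful control of flatness with respect to the \(p\)-liquid tensor product. Once this bookkeeping is in place, the passage between the holomorphic inverse function theorem and the algebraic statement is essentially formal.
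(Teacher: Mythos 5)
Your overall strategy (reduce to a Jacobian condition and invoke the classical holomorphic inverse function theorem, following Steffens) is the same as the paper's, but the central step is missing, and you acknowledge as much in your final paragraph: the passage from vanishing of \(\LL_{B/A}\) to ``after a localization one may assume \(m=n\) and \(J\) is invertible'' is precisely where all the content of the theorem lives, and it is not something one can arrange in general. Two concrete problems. First, the presentation \(B\simeq A\{t_1,\ldots,t_n\}/(f_1,\ldots,f_m)\) as a naive quotient by an ideal, and the resulting exact sequence \(I/I^2\to B^{\oplus n}\to H^0(\LL_{B/A})\to 0\), only see \(\pi_0(B)\) and the two lowest homotopy groups of the cotangent complex; a finitely presented \emph{animated} EFC algebra has higher cells, and your argument never engages with them, so at best you would prove the statement for discrete \(B\). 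Second, even at the level of \(\pi_0\), selecting \(n\) of the relations with invertible Jacobian minor does not present \(B\) itself: it presents an EFC algebra \(A_i\) which is étale over \(A\) and admits a \(\pi_0\)-surjection \(A_i\to B\{b_i^{-1}\}\) whose kernel is generated by the remaining relations \(f_{n+1},\ldots,f_m\); these could a priori cut the space down further. The paper's proof does not try to force \(m=n\). Instead it factors \(A\to B\) étale-locally as \(A\xrightarrow{\pi^*}A_i\xrightarrow{\iota^*}B\{b_i^{-1}\}\) with \(\pi^*\) étale and \(\iota^*\) a \(\pi_0\)-surjection (\Cref{lem:factorization}, which uses only the vanishing of \(H^0(\LL_{B/A})\) plus a derived lifting step), and then uses the vanishing of \(\pi_0\) and \(\pi_1\) of the relative cotangent complex of \(\iota^*\) to identify \(\iota^*\) as a localization, hence étale (\Cref{lem:etale_localization}). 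That second lemma is the ingredient your ``Nakayama-type argument'' would have to reproduce, and it genuinely uses connectivity of \(\LL\) beyond degree zero.

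Two smaller points. Your appeal to ``the existence of enough EFC points developed in \Cref{subsec:definition_basic_properties}'' is misplaced: in the paper that notion concerns general liquid commutative algebras and its sufficiency is only a \emph{conjecture}; what you actually need here is the elementary fact that an element of a finitely generated Stein algebra is invertible if and only if it is nonvanishing at every closed point, which is available but should be cited as such. And the ``only if'' direction is fine as stated.
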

The following proof is essentially the same as Steffens' in \cite[Proposition 5.1.3.32]{Steffens_Thesis}, adapted to our setting.
\begin{proof}[\ul{Proof of \Cref{st:infinitesimal_criterion} from \Cref{thm:inverse_function_theorem}}]
	One direction is obvious, i.e. if the morphism is locally of the form \(A\to A\otimes_{\Liq_p}\C\{t_1,\ldots,t_n\}\) then the relative cotangent complex is locally free of rank \(n\). For the converse direction, we use the transitivity triangle to get the following fibre sequence.
	\[
		\LL_{A}\otimes_A B\to \LL_{B}\to \LL_{B/A}.
	\]
	By assumption \(\LL_{B/A}\) is a locally free liquid \(B\)-module of finite rank. Thus, we can replace the module \(\LL_{B/A}\) by a direct sum of \(n\) copies of \(B\) in degree \(0.\) We have a map of the following form.
	\[
		\pi_0(B)^m\xrightarrow{K} \pi_0(B)^n\to 0.
	\]
	The minors of the matrix \(K\) generate the unit ideal in \(\pi_0(B)\) by the fitting ideal lemma and thus after a suitable localization can be chosen to determine a formally \'etale map \(A\otimes \C\{T_1,\ldots, T_n\} \to B.\) Now we use \Cref{thm:inverse_function_theorem} to conclude that this map is indeed \'etale. 
\end{proof}
Proof of \Cref{thm:inverse_function_theorem} relies on the following two lemmas.
\begin{lemma}\label{lem:factorization}
	Let \(f: A\to B\) be a morphism of EFC algebras such that the relative cotangent complex is connected, or equivalently, the module of classical Kähler differentials is zero. Then there exists a factorization of the following form for a finite collection of elements \(b_1,\ldots,b_n\in \pi_0(B).\)
	\[
		A\xrightarrow{\pi^*} A_i\xrightarrow{\iota^*} B\{b_i^{-1}\}
	\] 
	Here \(\pi^*\) is an \'etale morphism and \(\iota^*\) is a surjection on \(\pi_0.\) Here \(B\{b_i^{-1}\}\) is the localization of \(B\) at the element \(b_i\) \emph{in the EFC sense}, i.e. the the algebra of holomorphic functions on the open subset of \(\Spec B\) where \(b_i\) does not vanish.
\end{lemma}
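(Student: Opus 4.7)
The approach is a standard factoring technique adapted to the EFC setting: present $B$ as an explicit quotient of a free EFC algebra, read the vanishing hypothesis off the cotangent sequence as a surjectivity statement on a Jacobian matrix, and then invoke the Fitting ideal lemma together with the classical holomorphic inverse function theorem to produce the desired local étale factorization on a finite cover.

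First, I would present $B$ as a finitely presented EFC algebra over $A$, writing
\[
B \;\simeq\; \bigl(A \otimes_{\Liq_p} \C\{t_1,\ldots, t_N\}\bigr)/(r_1,\ldots, r_k)
\]
for some finite collection of relations $r_j$. Since the cotangent complex of $A\otimes_{\Liq_p}\C\{t_1,\ldots, t_N\}$ relative to $A$ is the free module on $dt_1,\ldots, dt_N$, the transitivity cofiber sequence identifies $\Omega_{B/A} = \pi_0 \LL_{B/A}$ with the cokernel of the Jacobian map $J : B^k \to B^N$ sending the $j$-th basis vector to $dr_j = \sum_i (\partial r_j/\partial t_i)\, dt_i$. (The Leibniz rule for EFC operations ensures that differentials of arbitrary EFC-combinations of the $r_j$ contribute nothing beyond $B$-linear combinations of the $dr_j$.) The hypothesis $\Omega_{B/A}=0$ thus translates to $J$ being surjective on $\pi_0$.

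Next, by the Fitting ideal lemma, surjectivity of $J$ forces the ideal of $\pi_0(B)$ generated by the $N\times N$ minors of $J$ to be the unit ideal, so finitely many such minors $b_1,\ldots, b_n \in \pi_0(B)$ already generate it. The EFC-loci $\{b_i\neq 0\}$ therefore cover $\Spec B$. For each $i$, the minor $b_i$ is by construction the determinant of the Jacobian of some $N$-tuple $(f_1^{(i)},\ldots, f_N^{(i)})$ drawn from $\{r_1,\ldots, r_k\}$ with respect to $(t_1,\ldots, t_N)$. Setting
\[
A_i \;:=\; \bigl(A \otimes_{\Liq_p} \C\{t_1,\ldots, t_N\}\big/(f_1^{(i)},\ldots, f_N^{(i)})\bigr)\{b_i^{-1}\},
\]
the canonical surjection $A\otimes_{\Liq_p} \C\{t_1,\ldots, t_N\} \twoheadrightarrow B\{b_i^{-1}\}$ factors through $A_i$, and the induced $\iota^{\ast}:A_i \to B\{b_i^{-1}\}$ is a $\pi_0$-epimorphism because it is given by further imposing the remaining relations $r_j$ for $j$ outside the chosen $N$-tuple.

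Finally, $\pi^\ast : A \to A_i$ should be étale. The Jacobian of $(f_1^{(i)},\ldots, f_N^{(i)})$ with respect to $(t_1,\ldots, t_N)$ has determinant $b_i$ up to sign, which is a unit in $\pi_0(A_i)$, so the classical complex-analytic inverse function theorem says that the zero locus of the $f_\ell^{(i)}$ in $(\Spec A)\times\C^{N}$, restricted to $\{b_i\neq 0\}$, is a local biholomorphism onto $\Spec A$. Turning this analytic conclusion into EFC-étaleness is the step I expect to be the main obstacle: one must verify that EFC-localization at $b_i$ is genuinely computed by the corresponding analytic open subset, and that the local biholomorphisms over the opens $\{b_i\neq 0\}$ assemble into an honestly étale morphism of EFC algebras rather than merely an analytic local isomorphism. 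Granted this, the factorization $A \xrightarrow{\pi^\ast} A_i \xrightarrow{\iota^\ast} B\{b_i^{-1}\}$ has the desired properties.
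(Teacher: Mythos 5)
Your proposal follows essentially the same route as the paper's own (sketched) proof, which likewise presents $B$ as a quotient of a free EFC algebra, reads the vanishing of $\Omega_{B/A}$ as a rank/surjectivity condition on the Jacobian, selects minors generating the unit ideal, and localizes to obtain the standard-étale factorization. You actually supply more detail than the paper at the covering and construction steps, and the remaining issue you flag (upgrading the $\pi_0$-level analytic statement to honest EFC-étaleness and to the animated level) is exactly the step the paper also defers, citing Steffens and the connectivity of the cotangent complex.
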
 
\begin{proof}
    The proof is essentially the same as that of \cite[Lemma 5.1.3.20]{Steffens_Thesis}. We include here a sketch of the argument for the sake of completeness.

	We start by presenting \(A\) and \(B\) as quotients of free EFC algebras as follows.
	\[
		\C\{t_1,\ldots,t_n\}/I\simeq \pi_0(A), \quad \C\{t_1,\ldots,t_n,t_{n+1},\ldots,t_m\}/J\simeq \pi_0(B).
	\]
	By assumption on the K\"ahler differentials we see that the Jacobian matrix has to have rank equal to \(m-n.\) Then we can choose a collection of \(m-n\) generators of the ideal \(J\) such that the corresponding minor of the Jacobian matrix is invertible in \(\pi_0(B).\) Localizing at the determinant of this minor we get the desired factorization at the level of \(\pi_0.\) By doing an appropriate derived factorization (that uses connectivity of the cotangent complex) we can lift this to the level of animated EFC algebras.
\end{proof}
\begin{lemma}[{\cite[Lemma 5.1.3.22]{Steffens_Thesis}}]\label{lem:etale_localization}
	Let \(A\to B\) be a \(\pi_0\)-surjective morphism of EFC algebras such that the relative cotangent complex \(\LL_{B/A}\) is \(1\)-connected (i.e. \(\pi_{0}(\LL_{B/A})=\pi_{1}(\LL_{B/A})=0\)). Then the morphism \(\pi_0(A)\to \pi_0(B)\) is a localization of EFC algebras for some element \(\pi_0(A).\)
\end{lemma}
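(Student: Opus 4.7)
The plan is to adapt Steffens' argument from the $\CI$-case \cite[Lemma 5.1.3.22]{Steffens_Thesis} to the EFC setting: I extract an idempotent in $\pi_0(A)$ from the vanishing of the first homotopy of the relative cotangent complex, and then interpret the resulting algebraic localization as an EFC localization. First I set $J := \ker(\pi_0(A) \to \pi_0(B))$, well-defined by the $\pi_0$-surjectivity hypothesis. The $1$-connectedness of $\LL_{B/A}$ combined with the standard conormal identification for the EFC cotangent complex should give $J/J^2 = 0$ as a $\pi_0(B)$-module, i.e., $J = J^2$ in $\pi_0(A)$. This conormal identification is the EFC analogue of the classical statement that $\pi_1$ of the cotangent complex of a surjection is the conormal module, and in our setting I would verify it by fixing an EFC presentation $B \simeq A \otimes \C\{t_1, \ldots, t_k\}/I$ (which makes $J$ finitely generated in the process), passing through \Cref{thm:fully_faithfullness_of_EFC} to work in animated EFC algebras, and computing directly against the free case.

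Next I would invoke the classical determinantal trick (Nakayama-with-determinants): writing $J = (a_1, \ldots, a_n)$ with $a_i = \sum_j b_{ij} a_j$ and $b_{ij} \in J$, Cramer's rule applied to $(I - (b_{ij}))\vec{a} = 0$ yields an element $e \in J$ with $(1-e)a_i = 0$ for all $i$, forcing $e$ to be idempotent and $J = e\cdot \pi_0(A)$. Hence $\pi_0(B) \cong \pi_0(A)/(e) \cong \pi_0(A)[f^{-1}]$ in the purely algebraic sense, where $f := 1-e$. To upgrade this to an EFC-localization, I would use the fact that since $e$ is idempotent, $\Spec(\pi_0(A))$ decomposes as the clopen disjoint union $V(e) \sqcup V(1-e)$, and on such a clopen piece the EFC-localization at $f$ (the algebra of holomorphic functions on the open set $\{f\ne 0\}$) coincides with the algebraic localization, yielding the desired conclusion.

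The main obstacle will be rigorously establishing the conormal identification $\pi_1(\LL_{B/A}^{\EFC}) \cong J/J^2$: while the analogue is standard in classical commutative algebra, in the EFC framework it must be checked intrinsically against EFC-derivations rather than ordinary algebraic ones, and one has to control how the underlying liquid module of the EFC cotangent complex interacts with derived EFC-quotients. A secondary but related subtlety is verifying that finite generation of the EFC-ideal defining $B$ over $A$ implies finite generation of $J$ as an ordinary ideal of $\pi_0(A)$; this is expected to follow from a direct analysis of the EFC-presentation but deserves a careful justification before one may appeal to the determinantal trick above.
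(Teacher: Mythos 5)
Your proposal is correct and follows essentially the same route as the paper's own (very terse) proof, which likewise reduces the $1$-connectedness of $\LL_{B/A}$ to the vanishing of the conormal module $J/J^2$ and then ``explicitly constructs the localizing element'' --- in your version, the idempotent $e$ with $J=(e)$ and $f=1-e$. You also correctly flag the two points the paper leaves implicit (the EFC conormal identification and the finite generation of $J$, the latter supplied by the finite-type hypothesis in force throughout \Cref{subsec:infinitesimal_criterion}), so there is nothing further to add.
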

\begin{proof}
	The proof is standard and relates the connectivity of the cotangent complex with the vanishing of the naive Kähler differentials. From there, one can explicitly construct the desired localizing element. 
\end{proof}
\begin{proof}[\ul{Proof of \Cref{thm:inverse_function_theorem}}]
	The proof of this result is identical to that of \cite[Theorem 5.1.3.17]{Steffens_Thesis}. One direction is obvious, i.e. if the morphism is \'etale then the relative cotangent complex vanishes. For the converse direction, we use \Cref{lem:factorization} to get a factorization of the following form.
	\[
		A\xrightarrow{\pi^*} A_i\xrightarrow{\iota^*} B\{b_i^{-1}\}
	\] 
	Here \(\pi^*\) is an \'etale morphism and \(\iota^*\) is a surjection on \(\pi_0.\) Since the relative cotangent complex of \(A\to B\) vanishes, the same holds for the morphism \(A_i\to B\{b_i^{-1}\}.\) Thus it is enough to show that \(\iota^*\) is \'etale. However now we can use \Cref{lem:etale_localization} to conclude since \(\iota^*\) is a \(\pi_0\)-epimorphism.\qedhere
\end{proof}

\section{Duality results}\label{sec:duality_results}
\subsection{Tannaka duality for Stein spaces}\label{subsec:tannaka_duality_stein} In this section, we show a basic Tannaka duality statement for Stein spaces. We first review the following statement known as Stein duality, see \cite{Pridham_EFC}, \cite{Pirkovskii} for details, which we also provide a proof of for the sake of completeness.
\begin{proposition}\label{st:Stein_Duality}
    Given a morphism of EFC algebras of finite type \(A\to B\) there is an associated morphism of Stein spaces \(\Spec B\to \Spec A.\) 
\end{proposition}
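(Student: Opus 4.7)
The plan is to make \(\Spec\) concrete by using the fact that any finitely generated EFC algebra admits a presentation as a quotient of a free EFC algebra on finitely many generators. So let me start by fixing such a presentation \(A \simeq \O\{t_1,\ldots,t_n\}/(f_1,\ldots,f_m)\) and setting \(\Spec A\) to be the closed analytic subvariety
\[
X_A = \{z \in \C^n : f_1(z) = \cdots = f_m(z) = 0\} \subset \C^n.
\]
Closed analytic subvarieties of \(\C^n\) defined by finitely many global holomorphic equations are globally finitely presented Stein spaces, so this does land in \(\Stein\). The first thing I would verify is that \(X_A\) is independent, up to canonical biholomorphism, of the chosen presentation: given two presentations, one can combine them to a third that projects to both, and the projections are biholomorphic via the universal property of free EFC algebras applied to the coordinates.

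Given a morphism \(A \to B\) of EFC algebras of finite type, I would construct the corresponding morphism \(X_B \to X_A\) as follows. Fix presentations \(A \simeq \O\{t_1,\ldots,t_n\}/(f_1,\ldots,f_m)\) and \(B \simeq \O\{s_1,\ldots,s_k\}/(g_1,\ldots,g_l)\). The images of the generators \(t_1,\ldots,t_n \in A\) in \(B\) are entire holomorphic functions \(\varphi_1,\ldots,\varphi_n\) on \(X_B\). These assemble into a holomorphic map \(\Phi = (\varphi_1,\ldots,\varphi_n) : X_B \to \C^n\). Because the morphism \(A \to B\) respects the EFC structure and kills each \(f_j\), the composition \(f_j \circ \Phi\) vanishes on \(X_B\), so \(\Phi\) factors through \(X_A \subset \C^n\). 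This factorization defines the morphism \(\Spec B \to \Spec A\).

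To check contravariant functoriality, one observes that both \(\id_A\) and composition are preserved by this construction essentially by inspection: the identity on \(A\) sends \(t_i \mapsto t_i\), giving the identity inclusion \(X_A \hookrightarrow \C^n\); and a composition \(A \to B \to C\) sends \(t_i\) to its iterated image, giving precisely the composite of the associated maps on Stein spaces.

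The main subtlety I expect is showing well-definedness of \(X_A\) independent of presentation, because it tacitly uses that the holomorphic functions on \(X_A\) recover the EFC algebra \(A\) (so that distinct presentations of the same EFC algebra produce the same underlying Stein space). This is the content of an analytic Nullstellensatz / Cartan coherence input in the globally finitely presented setting, and is the geometric substance behind the otherwise formal statement. Once this is in place, the construction defines a contravariant functor from finitely generated EFC algebras to Stein spaces; taken together with the earlier identification \(\Stein := (\text{f.g. EFC alg.})^\op\), this realizes the tautological duality by honest holomorphic maps of Stein spaces, matching the accounts of Pridham and Pirkovskii.
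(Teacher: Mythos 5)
Your construction is essentially the paper's: both proofs fix finite presentations, transport the generators of the free EFC algebra presenting \(A\) along \(A\to B\), and use the universal property of free EFC algebras to turn that finite tuple of elements into a holomorphic map. The difference is where you run the argument. The paper stays upstairs: it lifts the images \(f(\pi(T_i))\in B\) along the presentation \(\pi':\C\{T_1,\ldots,T_m\}\twoheadrightarrow B\) to get a map \(\C[T_1,\ldots,T_n]\to\C\{T_1,\ldots,T_m\}\), extends it to \(\wt f:\C\{T_1,\ldots,T_n\}\to\C\{T_1,\ldots,T_m\}\) by the defining extension property of (liquid) EFC algebras, and then restricts and corestricts the induced map \(\AA^m\to\AA^n\). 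You go downstairs, reading the images of the generators as holomorphic functions on the model \(X_B\subset\C^k\) and checking that the resulting \(\Phi:X_B\to\C^n\) lands in \(X_A\); your \(\varphi_i\) are exactly the restrictions to \(X_B\) of the paper's chosen lifts, so the two maps agree. What your route costs is an extra input the paper avoids: evaluation of elements of the abstract quotient \(B\) on points of \(X_B\), and, for your well-definedness step, the claim that \(\O(X_A)\) recovers \(A\). Be careful with the latter: it fails for non-reduced quotients such as \(\O\{t\}/(t^2)\) unless \(X_A\) carries its (possibly non-reduced, and in this paper derived) structure sheaf rather than being a bare vanishing locus; and it is also unnecessary here, since \(\Stein\) is \emph{defined} as the opposite of the category of finitely generated EFC algebras, so \(\Spec A\) needs no presentation-independence check. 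The only content of the proposition is that the formal dual of \(A\to B\) is realized by an honest holomorphic map, which both arguments deliver, with the paper's upstairs version adapting more directly to the derived/liquid setting.
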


\begin{proof}
	Consider two finite presentations for \(A\) and \(B\). Denote them by \(\C\{T_1,\ldots,T_n\}\xrightarrow{\pi} A\) and \(\C\{T_1,\ldots,T_m\}\xrightarrow{\pi'} B.\) Then we have the following diagram.
\[\begin{tikzcd}
	{\C\{T_1,\ldots,T_n\}} && {\C\{T_1,\ldots,T_m\}} \\
	\\
	A && B
	\arrow["{\wt{f}}", dashed, from=1-1, to=1-3]
	\arrow["\pi"', from=1-1, to=3-1]
	\arrow[from=1-1, to=3-3]
	\arrow["{\pi'}", from=1-3, to=3-3]
	\arrow["f", from=3-1, to=3-3]
\end{tikzcd}\]
	Now we observe that \(\wt{f}\) can be constructed as follows. Consider the generators \(T_1,\ldots,T_n\) of the liquid EFC algebras \(\C\{T_1,\ldots,T_n\}\). Then we have a collection of \(n\) elements \(f\circ\pi(T_1),\ldots,f\circ\pi(T_n)\) in \(B.\) They determine a morphism of liquid algebras of the following form. 
	\[
		\begin{tikzcd}
			& \C\{T_1,\ldots,T_n\} \arrow[dr, dashed] &\\
			\C[T_1,\ldots,T_n] \arrow[rr] \arrow[d, "\pi\circ \iota"] \arrow[ru,"\iota"] & & \C\{T_1,\ldots,T_m\} \arrow[d, "\pi'"] \\
			A \arrow[rr, "f"] & & B
		\end{tikzcd}
	\]
	The dashed arrow exists because \(\C\{T_1,\ldots,T_m\}\) is an EFC algebra and is precisely the map \(\wt{f}\) that we wanted. Thus, we have the desired map \(\Spec B\to \Spec A\) obtained by restriction and corestriction from the map of affine spaces \(\AA^m\to \AA^n\) induced by \(\wt{f}.\)
\end{proof}
The following result is a basic Tannaka duality statement for Stein spaces. It is essentially a direct consequence of Stein duality of \Cref{st:Stein_Duality}.
\begin{theorem}
    Consider a (derived) Stein space \(U.\) Consider a \(\Liq_p(\mathbb{C})\)-enriched symmetric monoidal colimit preserving functor \(F^*\) from \(\QCoh(U)\) to the category \(\QCoh(S)\) for a (derived) Stein space \(S.\) Then there exists a holomorphic map \(F:S\to U\) inducing this functor.
\end{theorem}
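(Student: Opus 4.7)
The plan is to reduce the statement to Stein duality (\Cref{st:Stein_Duality}) by invoking the universal property of module categories. Since $U$ and $S$ are both derived Stein spaces, the slice categories $\Stein_{/U}$ and $\Stein_{/S}$ admit $U$ and $S$ respectively as terminal objects, so the limits defining $\QCoh(U)$ and $\QCoh(S)$ collapse to give canonical equivalences $\QCoh(U)\simeq\Mod_{\O(U)}$ and $\QCoh(S)\simeq\Mod_{\O(S)}$, with modules taken inside $\Liq_p(\C)$.

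My first step would be to apply the standard universal property of module categories in a presentable symmetric monoidal $\infty$-category: for a $\Liq_p(\C)$-linear symmetric monoidal presentable stable $\infty$-category $\mathcal{D}$, the space of $\Liq_p(\C)$-linear symmetric monoidal colimit-preserving functors $\Mod_A\to\mathcal{D}$ is equivalent to the space of morphisms $A\to\mathbf{1}_{\mathcal{D}}$ in $\CAlg(\Liq_p(\C))$ (compare \cite[\S 4.8.5]{Lurie_HA}). Taking $A=\O(U)$ and $\mathcal{D}=\Mod_{\O(S)}$, and using that $F^*$ preserves units so that $F^*(\O(U))=\O(S)$, this produces a morphism $f:\O(U)\to\O(S)$ in $\CAlg(\Liq_p(\C))$.

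The next step is to promote $f$ to a morphism in the appropriate geometric category. Since $\O(U)$ and $\O(S)$ are both liquid EFC algebras (as algebras of holomorphic functions on globally finitely presented Stein spaces), $f$ is a morphism of liquid EFC algebras; via \Cref{thm:fully_faithfullness_of_EFC}, it corresponds uniquely to a morphism of animated EFC algebras. Applying Stein duality (\Cref{st:Stein_Duality}) then produces the desired holomorphic map $F:S\to U$. That pullback along $F$ recovers the original $F^*$ is then tautological, since both functors are determined by the same algebra map $f$ through the universal property invoked in the first step.

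The only potential subtlety I see is verifying that the universal property of module categories holds in the $\Liq_p(\C)$-linear setting. However, this is a formal Morita-theoretic consequence of $\Liq_p(\C)$ being a presentable symmetric monoidal $\infty$-category, so it should pose no genuine obstacle; the real content of the theorem is packed into \Cref{thm:fully_faithfullness_of_EFC} and \Cref{st:Stein_Duality}, both of which are already established.
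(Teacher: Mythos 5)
Your proposal is correct and takes essentially the same route as the paper: both arguments extract a morphism of liquid commutative algebras \(\O(U)\to\O(S)\) from the monoidal functor (the paper via its action on endomorphisms of the unit, you via the universal property of module categories), observe that the EFC condition is a property automatically preserved by maps of liquid algebras, and conclude by Stein duality (\Cref{st:Stein_Duality}). Your version is slightly more explicit about why pullback along the resulting map recovers \(F^*\), which the paper leaves implicit.
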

\begin{proof}
    Note that since the functor \(F^*\) is monoidal, we have the following morphism on \(\Hom\) liquid vector spaces.
    \[
        \Hom(\O_U,\O_U)\xrightarrow{F^*} \Hom(F^*\O_U,F^*\O_U)=\Hom(\O_S,\O_S).
    \]
    This, in turn, implies that there exists a morphism of liquid commutative algebras \(F^*:\O(U)\to \O(S).\) Note that both \(\O(U)\) and \(\O(S)\) are liquid EFC algebras by virtue of being comprised of holomorphic functions on Stein spaces. Hence, the morphism \(F^*\) preserves the EFC structure because it is a map of liquid algebras. Consequently, we can use Stein duality of \Cref{st:Stein_Duality} to see that there is the requisite map \(S\to U.\)
\end{proof}
The following result which replaces the requirement above that the functor is enriched in liquid vector spaces to just being enriched in usual vector spaces is a consequence of the ``automatic continuity phenomenon'' of Forster \cite{forster1966}. Recently, Benoist obtained an improved version of this result in \cite{Benoist_Stein}, in which one does not have to assume that the Stein spaces in question are of finite dimension. 
\begin{theorem}
	Consider a (derived) Stein space \(U\). Consider a \(\C\)-linear symmetric monoidal colimit preserving functor \(F^*\) from \(\QCoh(X)\) to the category \(\QCoh(S)\) for a (derived) Stein space \(S.\) Then there exists a holomorphic map \(F:S\to U\) inducing this functor.
\end{theorem}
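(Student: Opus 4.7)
The plan is to reduce to the preceding theorem by promoting the abstract \(\C\)-algebra homomorphism extracted from \(F^*\) to a morphism of liquid EFC algebras, at which point the earlier argument applies verbatim. Exactly as in the proof of the previous theorem, applying \(F^*\) to the endomorphisms of the unit \(\O_U\in\QCoh(U)\) and using the symmetric monoidal structure produces a homomorphism of commutative \(\C\)-algebras
\[
    F^*:\O(U)\to\O(S).
\]
Under the stronger hypothesis of \(\Liq_p(\C)\)-enrichment this map was automatically a morphism of liquid algebras; under the present \(\C\)-enrichment it is a priori only a map of the underlying abstract \(\C\)-algebras, and the task is to recover the liquid structure by other means.

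The essential input is Forster's automatic continuity theorem \cite{forster1966}, or its generalization due to Benoist \cite{Benoist_Stein}, which asserts that every \(\C\)-algebra homomorphism between algebras of holomorphic functions on Stein spaces is automatically continuous with respect to their Fr\'echet topologies. Applied to \(\pi_0 F^*:\pi_0\O(U)\to\pi_0\O(S)\) — a homomorphism between algebras of holomorphic functions on the classical truncations, which are honest globally finitely presented Stein spaces — this yields a continuous \(\C\)-algebra homomorphism. Since the liquid structures on \(\O(U)\) and \(\O(S)\) are by \Cref{con:liquid_struct_on_EF} exactly those determined by their Fr\'echet topologies, continuity is precisely the statement that \(\pi_0 F^*\) is a morphism of static liquid commutative algebras, and hence, being a homomorphism between holomorphic function algebras, a morphism of static liquid EFC algebras.

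The main obstacle is lifting this \(\pi_0\)-level statement to a morphism of animated liquid EFC algebras compatible with the full derived structure already carried by \(F^*\). This is where \Cref{thm:fully_faithfullness_of_EFC} is used: maps out of a finitely generated free animated EFC algebra \(\O\{t_1,\ldots,t_n\}\) into any liquid commutative algebra \(B\) are equivalent, through idempotency of free EFC algebras over the corresponding polynomial algebras, to maps out of \(\C[t_1,\ldots,t_n]\), so such a morphism is pinned down by an \(n\)-tuple in the space of points of \(B\). Applying this generator by generator to a sifted-colimit presentation of \(\O(U)\) by free EFC algebras, the continuity statement for \(\pi_0 F^*\) together with the \(\C\)-linear coherence data already present in \(F^*\) suffice to promote \(F^*\) to a morphism of animated liquid EFC algebras. \Cref{st:Stein_Duality} then produces the desired holomorphic map \(F:S\to U\), and a formal check that base-change along \(F\) recovers the original functor on generators of \(\QCoh(U)\simeq\Mod_{\O(U)}\) completes the proof.
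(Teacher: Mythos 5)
Your proposal is correct and follows essentially the same route as the paper: extract the abstract \(\C\)-algebra homomorphism \(\O(U)\to\O(S)\) from the monoidal functor, invoke Forster/Benoist automatic continuity to upgrade it on \(\pi_0\) to a continuous (hence liquid) homomorphism, observe that the derived/negative-degree part imposes no further continuity constraints, and then conclude via the liquid-enriched theorem and Stein duality. Your use of \Cref{thm:fully_faithfullness_of_EFC} to justify the lift from \(\pi_0\) to the animated level is just a more explicit version of the paper's remark that the continuity condition on negative homotopy groups is vacuous.
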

On its face, the result above seems surprising; indeed, one could imagine pathological algebra homomorphisms between algebras of holomorphic functions that are not continuous. However, automatic continuity of functionals with finite-dimensional range implies that such pathological homomorphisms cannot exist for algebras of holomorphic functions on Stein spaces.
\begin{proof}
	By \cite[Theorem 3.2]{Benoist_Stein} any algebra homomorphism between the algebras of holomorphic functions on Stein spaces is automatically continuous. Hence, it is clear that at the level of \(\pi_0\) of algebras of functions, there is a continuous (and hence liquid) homomorphism inducing the functor \(F^*.\) It remains to note that the continuity condition on negative homotopy groups for EFC algebras is vacuous and we may conclude as in the previous theorem.
\end{proof}

\subsection{Tannaka duality for (1-)geometric stacks}\label{subsec:tannaka_duality_geometric} In this section, we prove the main Tannaka duality statement for any (1-) geometric stack. We follow the approach of Lurie \cite{Lurie_Tannaka_GS} in the algebraic setting. In particular, we impose a tameness condition on our functors, rather than model our approach on the more general recent theorem of Stefanich \cite{Stefanich_Tannaka} which drops the tameness hypothesis, since it is more straightforward and requires less category theory to understand. We intend to import Stefanich's method into the analytic setting in future work to get a fully general Tannaka duality result.

\begin{definition}[{\cite{Lurie_Tannaka_GS}, \cite{Lurie_DAG8}}]
	A right \(t\)-exact symmetric monoidal functor between two stable presentable monoidal categories with \(t\)-structures is said to be \emph{tame} if it preserves flatness and small colimits.
\end{definition}

\begin{definition}
	A stack \(\X\) on the category \(\Stein\) of (derived) Stein spaces is \emph{geometric} if the following conditions hold.
	\begin{enumerate}[(i)]
		\item The diagonal morphism \(\Delta:\X\to \X\times \X\) is representable by derived Stein spaces and is a smooth morphism.
		\item There exists a smooth surjective morphism \(p:U\to \X\) from a derived Stein space~\(U.\)
	\end{enumerate}
\end{definition}
\begin{theorem}\label{thm:TD_for_1_geometric_stacks} Let \(\X\) be a geometric stack. Then the following categories are equivalent.
    \[
		T:\Hom(S,\X) \to \Hom^{\tame}_{\Liq_p(\C)}(\QCoh(\X),\QCoh(S)).
    \]
    The right-hand side is the full subcategory of the category of functors spanned by tame functors.

    We note that the same result holds for functors between categories of nuclear objects (see \Cref{subsec:nuclear_spaces_modules}), and that all our constructions remain in the category of nuclear quasicoherent sheaves.
\end{theorem}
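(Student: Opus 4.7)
The plan is to reduce to the affine (Stein) case established in \Cref{subsec:tannaka_duality_stein} via smooth descent, following the strategy of Lurie \cite{Lurie_Tannaka_GS} in the algebraic setting and adapting it to the liquid analytic framework using the intrinsic EFC characterisation developed in \Cref{sec:efc_liquid_algebras}. By geometricity of $\X$, fix a smooth surjective cover $p:U \to \X$ with $U$ a derived Stein space, and let $U_\bullet$ denote its \v{C}ech nerve; each $U_n = U \times_\X \cdots \times_\X U$ is again a derived Stein space because the diagonal of $\X$ is representable. Smooth (in particular, étale) descent gives $\QCoh(\X) \simeq \lim_{[n]\in \Delta}\QCoh(U_n)$, and by tameness a functor $F^*:\QCoh(\X) \to \QCoh(S)$ is controlled by its restriction to the cosimplicial diagram $\QCoh(U_\bullet)$.

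Given a tame $\Liq_p(\C)$-linear symmetric monoidal $F^*$, I would first construct a candidate morphism $f:S\to \X$ by producing a Stein cover of $S$ together with descent data. Apply $F^*$ to the quasi-coherent commutative algebra $p_* \O_U \in \CAlg(\QCoh(\X))$ to obtain a liquid commutative algebra $\A := F^*(p_* \O_U)$ in $\QCoh(S)$. The central technical claim is that $\A$ represents a derived Stein space $V \to S$ which is smooth and surjective. To verify this I would: (a) recognise $\A$ as a relative EFC algebra over $\O(S)$ using the intrinsic characterisation of \Cref{thm:fully_faithfullness_of_EFC}, with tameness ensuring the convergence and flatness conditions are preserved; (b) transfer local freeness of the cotangent complex along $p$ through $F^*$ (again using preservation of flatness and of the $t$-structure) to conclude $\LL_{\A/\O(S)}$ is locally free of finite rank, and hence that $V \to S$ is a smooth submersion by the Implicit Function Theorem \Cref{st:infinitesimal_criterion}; (c) deduce surjectivity of $V \to S$ from the surjectivity of $p$ via conservativity of $F^*$ on the relevant quotient objects. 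The algebra morphism $\O(U) \to \Gamma(V, \O_V)$ yields $V \to U$ by Stein duality \Cref{st:Stein_Duality}, and applying the same recipe to $p_1, p_2:U_1 \rightrightarrows U$ furnishes compatibility of the two pullbacks $V\times_S V \rightrightarrows U$ with the groupoid relations of $U_\bullet$, so that the composition $V \to U \to \X$ descends to the desired $f:S\to \X$.

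The identification $f^* \simeq F^*$ then follows because both tame monoidal functors agree after precomposition with $p^*:\QCoh(\X) \to \QCoh(U)$ by construction (and by the affine Tannaka result of \Cref{subsec:tannaka_duality_stein}), and smooth descent for $\QCoh$ determines any tame monoidal functor out of $\QCoh(\X)$ by its restriction to $\QCoh(U_\bullet)$. Fully faithfulness of $T$ reduces, by the analogous descent argument applied to $2$-morphisms, to the affine statement. The hard part will be step (a)--(b): certifying that the abstract liquid commutative algebra $F^*(p_*\O_U)$ really is the algebra of functions on an honest derived Stein space smoothly fibred over $S$, rather than some liquid algebra with no analytic content. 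This is precisely where the intrinsic liquid-EFC comparison of \Cref{thm:fully_faithfullness_of_EFC} and the geometric Implicit Function Theorem \Cref{st:infinitesimal_criterion} are indispensable; without them, the construction would only yield a liquid module-theoretic ``affine object'' over $S$ with no guarantee of geometric representability. A secondary subtlety is refining the smooth cover $V \to S$ so that it is compatible with the étale topology underlying $\dStk_\O$, for which one invokes the Inverse Function Theorem \Cref{thm:inverse_function_theorem} to split off the smooth direction locally on $V$.
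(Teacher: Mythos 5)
Your proposal is correct and follows essentially the same route as the paper: both hinge on the algebra $\A = F^*(p_*\O_U)$, the identification $\A\otimes_{\O_S}\A\simeq\A\otimes_A B$ coming from monoidality, faithful flatness from tameness, the transfer of an EFC presentation $\O_S\{x_1,\dots,x_n\}\twoheadrightarrow\A$ to certify analyticity, and the Implicit Function Theorem (\Cref{st:infinitesimal_criterion}) to extract geometric content from formal smoothness of $\O_S\to\A$. The only (inessential) divergence is in the endgame: you descend the map $\Spec\A\to U\to\X$ along the smooth cover $\Spec\A\to S$, whereas the paper uses the IFT to produce a section $\A\to\O_S$ directly and then invokes Lurie's original argument; these are interchangeable variants of the same conclusion.
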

Before giving the proof, we want to summarize the essence of our approach. In \cite{Lurie_Tannaka_GS}, Lurie uses the following strategy. Consider the sheaf of algebras corresponding to the pushforward of the structure sheaf on the smooth atlas to the geometric stack \(\X.\) A tame functor out of the category of quasicoherent sheaves on \(\X\) maps this sheaf into a sheaf of algebras over \(S.\) Using the tameness assumption one shows that this sheaf of algebras admits local sections and thus comes from a geometric morphism \(S\to \X.\)

Compared to Lurie's original argument, we have two things to establish. First, we need to show that the sheaf over the space \(S\) we are trying to map out of is ``sufficiently analytic''. Then we can apply the implicit function theorem \Cref{st:infinitesimal_criterion} to establish the existence of a section. Here, we essentially use the existence of the category of quasicoherent sheaves. We start with a ``purely algebraic'' sheaf of algebras over an analytic space, verify that it is analytic, and then show that it has good smoothness properties. All this is enabled by our ability to simultaneously perform algebraic and analytic constructions in the same category of sheaves.
\begin{proof}[Proof of essential surjectivity]
	Denote by \(A\) the liquid EFC algebra corresponding to the smooth atlas \(p:U\to \X\). Denote by \(B\) the liquid EFC algebra corresponding to the diagonal of \(\X\). That is, we have the following diagram of analytic stacks. 
\[\begin{tikzcd}
	{\Spec B} && {\Spec A=U} \\
	\\
	{\Spec A} && \X
	\arrow[from=1-1, to=1-3]
	\arrow[from=1-1, to=3-1]
	\arrow["\lrcorner"{anchor=center, pos=0.125}, draw=none, from=1-1, to=3-3]
	\arrow["p", from=1-3, to=3-3]
	\arrow[from=3-1, to=3-3]
\end{tikzcd}\]
Denote by \(\A\) the object \(F^*p_*\O_U\). We observe that the following equivalence of sheaves holds on \(\X\).
\[
p_*\O_U\otimes_{\O_\X} p_*\O_U\simeq p_*\O_U\otimes_A B.
\]
Thus, by symmetric monoidality and cocontinuity of the functor \(F^*\), we have the following equivalence of sheaves on \(S\).
\[
\A\otimes_{\O_S} \A \simeq \A\otimes_{A} B.
\]

Since $\mathcal{X}$ was a geometric stack (i.e. the map $\text{Spec} A \to \mathcal{X}$ is smooth, and thus the map $A \to B$ is smooth), we have that $B$ has a presentation as $A\{x_1, \dots x_n \}\twoheadrightarrow B$ with the (liquid) cotangent complex being a retract of a free finitely generated module. Then we have a presentation:

\[ \mathcal{A}\otimes_{\mathcal{O}_S} {\mathcal{O}_S}\twoheadrightarrow \mathcal{A} \otimes_A B.
\]
Thus we have produced a \(\pi_0\)-epimorphism:
\[\mathcal{A}\otimes_{\mathcal{O}_S} {\mathcal{O}_S}\{x_1, \dots, x_n \} = \mathcal{A}\{x_1, \dots, x_n \} \to \mathcal{A} \otimes_{\mathcal{O}_S} \mathcal{A}.
\]

$\mathcal{A}$ is faithfully flat over $\mathcal{O}_S$ because $p_*(\mathcal{O}_U)$ is faithfully flat since $p$ is a surjective smooth map from an affine, and $F^*$ sends faithfully flat objects to faithfully flat objects. Thus, we have a map which is still a \(\pi_0\)-epimorphism:

\[ \O_S\{x_1, \dots, x_n \} \overset{\eta}{\twoheadrightarrow} \A.
\]

We claim that the existence of this \(\pi_0\)-epimorphism endows $\A$ with an EFC structure. To see this, we need to verify the extension property. 

\[\begin{tikzcd}
	{\mathcal{O}_S[x_1, \dots, x_m ]} && {\mathcal{A}} \\
	{\mathcal{O}_S\{x_1, \dots x_m\}}
	\arrow["\varphi", from=1-1, to=1-3]
	\arrow[from=1-1, to=2-1]
	\arrow["{\tilde{\varphi}}"', dashed, from=2-1, to=1-3]
\end{tikzcd}\]

where we are given $\varphi$ and are looking for the extension $\wt{\varphi}$. Using the \(\pi_0\)-epimorphism $\eta$, we can find such an extension by taking a lift to the EFC algebra covering \(\A\) and then composing with the \(\pi_0\)-epimorphism.

\[\begin{tikzcd}[ampersand replacement=\&]
	{\O_S[x_1,\ldots,x_n]} \&\& {\O_S\{x_1,\ldots,x_m\}} \\
	\\
	{\O_S\{x_1,\ldots,x_n\}} \&\& \A
	\arrow["{\wt{\vp}}", curve={height=-12pt}, from=1-1, to=1-3]
	\arrow[from=1-1, to=3-1]
	\arrow["\vp"', curve={height=18pt}, from=1-1, to=3-3]
	\arrow["\eta", two heads, from=1-3, to=3-3]
	\arrow["{\text{\scriptsize exists and is unique}}"{description, pos=0.6}, from=3-1, to=1-3]
	\arrow[dashed, from=3-1, to=3-3]
\end{tikzcd}\]

Since $A \to B$ is smooth, by base change along the diagram, we see that the cotangent complex of the map $\A \to \A \otimes_A B$ is a retract of a finitely generated free module. i.e. the map is formally smooth.

\[\begin{tikzcd}
	A & {\mathcal{A}} \\
	B & {\mathcal{A} \otimes_A B}
	\arrow[from=1-1, to=1-2]
	\arrow[from=1-1, to=2-1]
	\arrow[from=1-2, to=2-2]
	\arrow[from=2-1, to=2-2]
\end{tikzcd}\]

Again using the isomorphism $\A \otimes_A B \cong \A \otimes_{\O_S} \A$ we have that the map $\A \to \A \otimes_{\O_S} \A$ is formally smooth. Using the faithful-flatness of $\A$ as an $\O_S$-module, we have that the map $\O_S \to \A$ is formally smooth. Now that we have a formally smooth map from $\O_S$ to an EFC algebra $\A$, we use the implicit function theorem (\Cref{st:infinitesimal_criterion}) to finally produce a section $\mathcal{A} \to \O_S$.

Now it is easy to verify (following Lurie's original argument) that the section $\mathcal{A} \to \O_S$ produces the desired map $S \to \mathcal{X}$ inducing the functor $F^*$.
\end{proof}
\begin{proof}[Proof of fully faithfulness]
	The proof is identical to the algebraic case.
\end{proof}
\begin{remark} Here we want to reiterate the point made at the beginning of this section about the generality of our results. Recent work of Stefanich \cite{Stefanich_Tannaka} and Stefanich--Scholze \cite{Stefanich_Scholze_Gestalten} suggests that there should be a fully general Tannaka duality result for higher geometric stacks over Stein spaces. The idea in the algebraic setting is to ``vertically categorify'' the Tannaka duality set-up by replacing rings with appropriate module categories, module \(2\)-categories over those and so on. We believe that this approach should work in the analytic setting as well, and we intend to pursue this in future work.
\end{remark}
\section{Applications}\label{sec:applications}

In this section, we first show that we can reconstruct any Stein complex Lie group G using the Tannakian formalism developed above. As a result of this, we can reconstruct the topological fundamental group of any complex algebraic variety, or more generally of any finite CW complex, rather than just the pro-algebraic completion of this fundamental group as is outputted by the algebraic Tannakian formalism. 

We follow this up by reconstructing from their liquid representations the so-called ``Stokes groupoids'' \cite{Gualtieri_Li_Pym_Stokes}. These are a series of analytic groupoids such that the finite-dimensional (i.e. on coherent sheaves) representations of the $n^{th}$ groupoid are exactly flat connections on $\mathbb{A}^1$ with a pole of order $n$ at the origin (i.e. the local theory of flat connections with a logarithmic singularity for the $n = 1$ case, and irregular singularities for the $n > 1$ case).

We then give an example which shows why there is no analogous analytic Tannaka duality theorem if one restricts attention to just coherent sheaves, followed by an example of a situation which shows that the condition of affineness of the diagonal is necessary for the main theorem.

\subsection{Reconstructing a Stein complex Lie group}\label{subsec:reconstructing_stein_lie_groups}

For a complex Lie group $G$, we denote the category $\text{QCoh}(BG)$ by $\text{Rep}^{\text{Liq}}(G)$. We also denote the pullback functor induced by the map $* \to BG$ by $U^{\text{Liq}}$ and call it the forgetful functor.

\begin{theorem}\label{thm:Stein_analytic_groups}
	Let \(G\) be a Stein complex Lie group. Let $U^{\text{Liq}}: \text{Rep}^{\text{Liq}}(G) \to \text{Liq}_p(\mathbb{C})$ be the forgetful functor to liquid vector spaces. Then $G = \text{Aut}^\otimes(U^{\text{Liq}})$.
\end{theorem}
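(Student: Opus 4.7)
The plan is to deduce this theorem as a direct application of the main Tannaka duality result (\Cref{thm:TD_for_1_geometric_stacks}) applied to the classifying stack $\X = BG$ with test Stein space $S = \ast$. The forgetful functor $U^{\mathrm{Liq}}$ is, by construction, the pullback functor along the canonical atlas map $\ast \to BG$ corresponding to the trivial $G$-torsor; once the hypotheses of the main theorem are verified for $BG$, the statement follows by taking automorphism groups on both sides of the resulting equivalence.

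The first step is to check that $BG$ is a geometric analytic stack in the sense required by \Cref{thm:TD_for_1_geometric_stacks}. Since $G$ is a Stein complex Lie group, it embeds as a closed complex submanifold of some $\mathbb{C}^n$ by the Remmert--Bishop--Narasimhan embedding theorem, and thus is globally finitely presented as a Stein space. The canonical map $p\colon \ast \to BG$ is a smooth surjection: for any $S \to BG$ classifying a $G$-torsor $P \to S$, the pullback is $P$ itself, which is smooth over $S$ because it is locally (in the \'etale topology) of the form $G \times S \to S$ with $G$ a complex Lie group. The diagonal $\Delta\colon BG \to BG \times BG$ is representable by $G$: pulling back along any map $S \to BG \times BG$ gives the Stein scheme $\mathrm{Isom}_S(P_1, P_2)$, which is a $G$-torsor and hence a globally finitely presented Stein space smooth over $S$. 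Thus $BG$ is geometric.

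The second step is to apply \Cref{thm:TD_for_1_geometric_stacks} with $\X = BG$ and $S = \ast = \Spec \C$, yielding an equivalence of anima
\[
T\colon \Hom(\ast, BG) \xrightarrow{\;\simeq\;} \Hom^{\tame}_{\Liq_p(\C)}\bigl(\QCoh(BG), \QCoh(\ast)\bigr) = \Hom^{\tame}_{\Liq_p(\C)}\bigl(\Rep^{\Liq}(G), \Liq_p(\C)\bigr).
\]
The anima $\Hom(\ast, BG)$ is the underlying classifying anima of $BG$, pointed by the trivial $G$-torsor, whose automorphism group (in the $\infty$-categorical sense) is precisely $G$ itself. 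Under $T$, this canonical point is sent to the pullback functor along $\ast \to BG$, which is exactly the forgetful functor $U^{\mathrm{Liq}}$.

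Finally, since $T$ is an equivalence of anima, it induces an equivalence on automorphism spaces at any point; applied to the trivial torsor / forgetful functor, this gives
\[
G \;\simeq\; \Aut_{\Hom(\ast, BG)}(\text{triv}) \;\xrightarrow{\;\simeq\;}\; \Aut_{\Hom^{\tame}_{\Liq_p(\C)}(\Rep^{\Liq}(G),\Liq_p(\C))}\bigl(U^{\mathrm{Liq}}\bigr) \;=\; \Aut^{\otimes}(U^{\mathrm{Liq}}),
\]
as required. The principal obstacle is purely verificational, namely confirming the geometricity of $BG$ and the global finite presentation of $G$ as a Stein space; once these hypotheses are in place, the result is a formal consequence of the main theorem applied at a point.
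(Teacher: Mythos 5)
Your proposal is correct and takes essentially the same route as the paper: verify that $BG$ is geometric (the paper cites Matsushima's theorem for the global finite presentation of the Stein group $G$, where you invoke the embedding theorem, but this is a minor difference of reference), apply \Cref{thm:TD_for_1_geometric_stacks} with $S = \ast$, and identify $G$ with the automorphisms of the point $\ast \to BG$ via the diagonal.
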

\begin{proof}
	Since $G$ gives the diagonal in the following diagram

\[\begin{tikzcd}
	G & {*} \\
	{*} & BG
	\arrow[from=1-1, to=1-2]
	\arrow[from=1-1, to=2-1]
	\arrow[from=1-2, to=2-2]
	\arrow[from=2-1, to=2-2]
\end{tikzcd}\]
    and is Stein (hence globally finitely presented by \cite[Th\'eor\`eme 2]{Matsushima}), $BG$ is a geometric stack and \Cref{thm:TD_for_1_geometric_stacks} applies. Taking automorphisms
    of $U^{\text{Liq}}$ then gives:
    \[\text{Aut}^{\otimes}(U^{\text{Liq}}: \text{Rep}^{\text{Liq}}(G) \to \text{Liq}_p(\mathbb{C})) = \text{Aut}(* \to BG) = G
    \]
    where the last equality follows from the fact that such automorphisms are given by the diagonal.
\end{proof}

\begin{example}\label{countable_groups_example}

Note that all analytic matrix Lie groups are of this form. Examples of such Stein complex Lie groups, which are \textit{not} algebraic groups, are discrete countable groups. To see this, note first that such groups have a unique holomorphic structure as a 0-dimensional complex manifold, and they are all in fact biholomorphic to $\mathbb{Z}$. Then note that $\mathbb{Z}$ is a globally finitely presented Stein space as it is the vanishing locus of the holomorphic function $f(z)=\text{sin}(\pi z)$. 
    
\end{example}

\subsection{Reconstructing the topological fundamental group of a complex algebraic variety}\label{subsec:fundamental_group}

Given a complex algebraic variety $X$, its complex points $X(\mathbb{C)}$ have the homotopy type of a finite CW-complex, which have finitely presented (and thus countable) fundamental groups. Consequently, $\pi_1(X^{an})$ is always a countable group.

Given a discrete countable group $G$, Tannaka duality for algebraic groups as in \cite{Deligne_Milne} allows one to recover only the \emph{pro-algebraic completion} of $G$, whereas the Tannaka duality theorem for geometric algebraic stacks as in \cite{Lurie_Tannaka_GS}) does not apply as $BG$ is not an algebraic stack in this case. Thus, even though the category of local systems on a complex algebraic variety is Tannakian, one can only hope to recover the pro-algebraic completion of the fundamental group from local systems on the space using usual algebraic Tannaka duality.

Denoting the category of liquid representations of $\pi_1(X^{an}, x)$ by $\text{LocSys}^{\text{Liq}}(X)$, and using instead the analytic Tannaka duality formalism developed in this paper, we have:

\begin{corollary}
    Let $X$ be a complex algebraic variety, with $x \in X(\mathbb{C)}$ a complex point. Let $U^{\text{Liq}}: \text{LocSys}^{\text{Liq}}(X) \to \text{Liq}_p(\mathbb{C})$ be the forgetful functor. Then $\pi_1(X^{an}, x) = \text{Aut}^\otimes(U^{\text{Liq}})$.
\end{corollary}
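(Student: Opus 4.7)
The plan is to reduce the corollary to a direct application of \Cref{thm:Stein_analytic_groups} for the specific group $G := \pi_1(X^{an}, x)$. By the definition recalled just before the statement, $\text{LocSys}^{\Liq}(X)$ is literally the category $\Rep^{\Liq}(\pi_1(X^{an}, x)) = \QCoh(B\pi_1(X^{an}, x))$, and $U^{\Liq}$ is the pullback functor induced by the pointing map $\ast \to B\pi_1(X^{an}, x)$ corresponding to $x$. Thus the only task is to verify that the hypotheses of \Cref{thm:Stein_analytic_groups} hold for $G = \pi_1(X^{an}, x)$ --- namely, that this group is a Stein complex Lie group.

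The verification proceeds in two steps. First, a complex algebraic variety $X$ has analytification $X^{an}$ with the homotopy type of a finite CW complex (this is classical: for instance, via Lojasiewicz's triangulation theorem, or by combining resolution of singularities with the standard CW structure on smooth projective varieties together with removal of a normal-crossings divisor). In particular, $\pi_1(X^{an}, x)$ is finitely presented, and therefore countable. Second, \Cref{countable_groups_example} establishes that any countable discrete group carries a canonical structure of a $0$-dimensional Stein complex Lie group which is moreover globally finitely presented, using that it is biholomorphic to a closed analytic subset of $\C$ cut out by $\sin(\pi z)$.

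With both inputs in place, $G = \pi_1(X^{an}, x)$ satisfies the hypotheses of \Cref{thm:Stein_analytic_groups}. Invoking that theorem then yields
\[
\Aut^{\otimes}(U^{\Liq}) \;=\; \Aut^{\otimes}\bigl(\Rep^{\Liq}(\pi_1(X^{an}, x)) \to \Liq_p(\C)\bigr) \;=\; \pi_1(X^{an}, x),
\]
which is exactly the claim of the corollary. I do not anticipate any genuine obstacle here: the proof is purely a matter of assembling the already-proved Tannaka theorem for Stein complex Lie groups with the classical finiteness of the homotopy type of complex algebraic varieties and the identification of countable discrete groups as globally finitely presented Stein spaces. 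All real content has been handled upstream in \Cref{thm:TD_for_1_geometric_stacks} and \Cref{thm:Stein_analytic_groups}.
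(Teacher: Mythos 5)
Your proposal is correct and follows exactly the paper's argument: the paper likewise observes that $X^{an}$ has the homotopy type of a finite CW complex, so $\pi_1(X^{an},x)$ is a countable discrete group, hence a globally finitely presented Stein complex Lie group by \Cref{countable_groups_example}, and then concludes by \Cref{thm:Stein_analytic_groups}. No differences worth noting.
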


\begin{proof}
    $\pi_1(X^{an},x)$ being a discrete countable group means it is a globally finitely presented Stein complex Lie group by \Cref{countable_groups_example}. The result then follows from \Cref{thm:Stein_analytic_groups}.
\end{proof}

\begin{example}\label{Pro-algebraic_completion}
    To contrast this with the result that is output by usual algebraic Tannaka duality, we remark that if we take our complex algebraic variety to be $\mathbb{C}^*$, so that $\pi_1(\mathbb{C}^*, x) = \mathbb{Z}$, the Tannakian group of $\text{Rep}(\mathbb{Z})$ is the pro-algebraic completion of $\mathbb{Z}$, which is $\mathbb{C} \times T \times \hat{\mathbb{Z}}$, where $T$ is the pro-torus with character group $Hom(\mathbb{Z, \mathbb{C}^*)}$ \cite{BLMM_proalgebraic}. The result above instead produces from the category of liquid local systems $\mathbb{Z}$ itself.
\end{example}

\subsection{Reconstructing the Stokes groupoids}\label{subsec:stokes_groupoids}

In \cite{Gualtieri_Li_Pym_Stokes}, given a complex curve $X$ and an effective divisor $D$, the authors construct certain analytic Lie groupoids $\Pi_1(X,D)$ they call twisted fundamental groupoids. The category of flat connections on vector bundles on $X$ with singularities along $D$ is equivalent to the category of representations of the Lie \textit{algebroid} $T_X(-D)$, whose sections are vector fields on $X$ which vanish along $D$. These Lie algebroids are integrable to Lie groupoids, and $\Pi_1(X,D)$ denotes the source simply-connected integration of $T_X(-D)$. One can view this as a generalized Riemann-Hilbert correspondence for flat connections with singularities along a divisor (logarithmic singularities if the divisor is reduced, and irregular singularities if it is not). From this point of view, the generalized Riemann-Hilbert correspondence is the composition of the purely formal identification of meromorphic flat connections with representations of a Lie algebroid with the transcendental procedure of integrating Lie algebroids to Lie groupoids.
\[ \text{FlatConn}_X(D) \cong \text{Rep}(T_X(-D))\cong\text{Rep}(\Pi_1(X,D))\]

In particular, the local theory is given by flat connections on the (complex) affine line $\mathbb{A}^1$ with singularities along the divisor $D = n.0$, corresponding to an $n^{th}$ order pole at the origin, which by the above equivalence are given by representations of $\Pi_1(\mathbb{A}^1, n.0)$. The authors refer to this as the $n^{th}$ Stokes groupoid. The first Stokes groupoid $\Pi_1(\mathbb{A}^1,0)$ (which classifies connections with a logarithmic singularity at the origin) is identified with the action groupoid $\mathbb{C} \ltimes \mathbb{A}^1$ of $\mathbb{C}$ acting on $\mathbb{A}^1$ by exponentiation followed by multiplication, i.e. $\lambda . z = e^{\lambda}z$. 

One may consider the stack presented by this groupoid as the quotient stack $\mathbb{A}^1/\mathbb{G}_a$, which is \textit{not} an algebraic stack because of the occurrence of the exponential in the action, unlike the algebraic stack $\mathbb{A}^1/\mathbb{G}_m$ which is presented by the algebraic action groupoid $\mathbb{C}^* \ltimes \mathbb{A}^1$. Thus, the formalism of analytic stacks rather than algebraic stacks is required to talk about logarithmic connections in this manner.

We can consider the category $\text{QCoh}(\mathbb{A}^1/\mathbb{G}_a) = \text{Rep}^{\text{Liq}}(\mathbb{C}\ltimes \mathbb{A}^1)$ as a quasicoherent generalization of the category of flat connections on $\mathbb{A}^1$ with a logarithmic singularity at the origin, by the equivalence above.

\begin{theorem}\label{thm:Sto_1_example}
    Let $p^*: \text{QCoh}(\mathbb{A}^1/\mathbb{G}_a) \to \text{QCoh}(\mathbb{A}^1)$ be the pullback functor induced by $\mathbb{A}^1 \to \mathbb{A}^1/\mathbb{G}_a$. Then $\text{Aut}^{\otimes}(p^*) = \Pi_1(\mathbb{A}^1, 0)$.
\end{theorem}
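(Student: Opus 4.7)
The plan is to apply \Cref{thm:TD_for_1_geometric_stacks} to the stack $\X := \mathbb{A}^1/\mathbb{G}_a$ and to interpret both sides functorially in a Stein test object $S$ to recover the Stokes groupoid.

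First I would verify that $\X$ is a geometric analytic stack. It is presented by the action groupoid $\mathbb{G}_a \times \mathbb{A}^1 \rightrightarrows \mathbb{A}^1$, whose source and target maps are smooth submersions (the source is the second projection; the target $(\lambda, z) \mapsto e^\lambda z$ has non-vanishing derivative in $\lambda$). Thus the atlas $p \colon \mathbb{A}^1 \to \X$ is smooth and surjective, and the fiber product $\mathbb{A}^1 \times_{\X} \mathbb{A}^1 \simeq \mathbb{G}_a \times \mathbb{A}^1 \simeq \mathbb{C}^2$ is a globally finitely presented Stein space.

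Next I would apply \Cref{thm:TD_for_1_geometric_stacks} to obtain, for every derived Stein space $S$, the natural equivalence of groupoids
\[
\Hom(S, \X) \simeq \Hom^{\tame}_{\Liq_p(\C)}(\QCoh(\X), \QCoh(S))
\]
sending $p$ to $p^*$. This equivalence is natural in $S$ and identifies, for each $S$, the space of $2$-isomorphisms $p \circ f_1 \simeq p \circ f_2$ (for $f_i \colon S \to \mathbb{A}^1$) with the space of monoidal natural isomorphisms $f_1^* p^* \simeq f_2^* p^*$. Hence the representing analytic space of $\Aut^{\otimes}(p^*)$, together with its two natural projections to $\mathbb{A}^1$, is equivalent to $\mathbb{A}^1 \times_{\X} \mathbb{A}^1 \rightrightarrows \mathbb{A}^1$. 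An $S$-point of this fiber product is a pair of maps $f_1, f_2 \colon S \to \mathbb{A}^1$ together with a holomorphic function $\lambda \colon S \to \mathbb{G}_a$ satisfying $e^\lambda f_1 = f_2$, which is exactly the action groupoid $\mathbb{C} \ltimes \mathbb{A}^1 = \Pi_1(\mathbb{A}^1, 0)$ of \cite{Gualtieri_Li_Pym_Stokes}, with source $f_1$, target $e^\lambda f_1$, and composition given by adding the $\lambda$'s.

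The main obstacle I anticipate is making the geometricity verification above fully rigorous, since the stabilizers of the $\mathbb{G}_a$-action jump from the discrete group $2\pi i \mathbb{Z}$ at $z \neq 0$ to all of $\mathbb{C}$ at $z = 0$, and consequently the combined map $\mathbb{A}^1 \times_{\X} \mathbb{A}^1 \to \mathbb{A}^1 \times \mathbb{A}^1$ fails to be smooth at the origin. The correct interpretation, consistent with how the proof of \Cref{thm:TD_for_1_geometric_stacks} actually uses smoothness of the diagonal (via the smoothness of the map $A \to B$ from the atlas algebra to the diagonal algebra), is to read the condition as smoothness of the source and target maps of the presenting groupoid, which does hold here.
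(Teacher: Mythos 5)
Your proposal is correct and follows essentially the same route as the paper: verify that $\mathbb{A}^1/\mathbb{G}_a$ satisfies the hypotheses of \Cref{thm:TD_for_1_geometric_stacks} via the presentation by the action groupoid $\mathbb{G}_a\times\mathbb{A}^1\rightrightarrows\mathbb{A}^1$, and then identify $\Aut^{\otimes}(p^*)$ with $\Aut(\mathbb{A}^1\to\mathbb{A}^1/\mathbb{G}_a)=\mathbb{A}^1\times_{\X}\mathbb{A}^1=\mathbb{C}\ltimes\mathbb{A}^1$. Your functor-of-points unwinding of the automorphism groupoid is just a more explicit version of the paper's remark that ``such automorphisms are given by the diagonal.''

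One point where you go beyond the paper is worth keeping: you correctly observe that the map $\mathbb{A}^1\times_{\X}\mathbb{A}^1\to\mathbb{A}^1\times\mathbb{A}^1$, $(\lambda,z)\mapsto(z,e^{\lambda}z)$, fails to be a submersion over the origin (its Jacobian determinant is $-e^{\lambda}z$), so the diagonal of $\mathbb{A}^1/\mathbb{G}_a$ is \emph{not} smooth in the literal sense demanded by clause (i) of the paper's definition of a geometric stack; the paper's proof only checks that the diagonal is an affine (globally finitely presented Stein) space. Your resolution --- that the hypothesis actually used in the proof of \Cref{thm:TD_for_1_geometric_stacks} is smoothness of $A\to B$, i.e.\ of the source and target maps $\mathbb{G}_a\times\mathbb{A}^1\to\mathbb{A}^1$, which are genuinely submersive since $\partial_z(e^{\lambda}z)=e^{\lambda}\neq 0$ --- is consistent with how the main theorem is proved and with Lurie's original definition (affine diagonal plus smooth atlas), and it is the reading under which the theorem applies to this example.
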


\begin{proof}
    Since the diagonal of the stack $\mathbb{A}^1/\mathbb{G}_a$ is given by the action groupoid $\mathbb{C} \ltimes \mathbb{A}^1$ whose underlying space is $\mathbb{A}^2$, the diagonal is a globally finitely presented Stein space, and we are in the setting of \Cref{thm:TD_for_1_geometric_stacks}. Then taking automorphisms of $p^*$ we have:
    \[ \text{Aut}^\otimes (p^*: \text{QCoh}(\mathbb{A}^1/\mathbb{G}_a) \to \text{QCoh}(\mathbb{A}^1)) = \text{Aut}(\mathbb{A}^1 \to \mathbb{A}^1/\mathbb{G}_a) = \mathbb{C} \ltimes \mathbb{A} = \Pi_1(\mathbb{A}^1, 0)
    \]
    where the second last equality follows from the fact that such automorphisms are given by the diagonal.
\end{proof}

In fact, exactly the same argument generalizes to the $n^{th}$ Stokes groupoid, even though these are no longer given as action groupoids. Denote by $B\mathcal{G}$ the stack associated with an analytic groupoid.

\begin{theorem}\label{Sto_n_example}
Let $p^*: \text{QCoh}(B\Pi_1(\mathbb{A}^1, n.0))$ be the pullback functor induced by $p: \mathbb{A}^1 \to B\Pi_1(\mathbb{A}^1, n.0)$. Then $\text{Aut}^{\otimes}(p^*) = \Pi_1(\mathbb{A}^1, n.0)$.
\end{theorem}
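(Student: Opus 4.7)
The plan is to follow the proof strategy of \Cref{thm:Sto_1_example} verbatim, replacing the explicit description of $\mathbb{C}\ltimes \mathbb{A}^1$ as the action groupoid with underlying space $\mathbb{A}^2$ by the general description of the Stokes groupoids $\Pi_1(\mathbb{A}^1,n.0)$ from \cite{Gualtieri_Li_Pym_Stokes}. The only new ingredient needed is a verification that the diagonal of $B\Pi_1(\mathbb{A}^1,n.0)$, which is the space of arrows of the groupoid, is representable by a globally finitely presented Stein space. Once this is done, \Cref{thm:TD_for_1_geometric_stacks} applies, and taking automorphisms of $p^*$ recovers the diagonal groupoid as desired.

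First, I would recall from \cite{Gualtieri_Li_Pym_Stokes} the explicit construction of $\Pi_1(\mathbb{A}^1,n.0)$. For each $n \geq 1$, the total space of arrows of the $n^{th}$ Stokes groupoid is exhibited as an explicit complex manifold, biholomorphic to $\mathbb{A}^2$ via coordinates in which the source, target, multiplication, and identity section are entire holomorphic maps. In particular, the arrow space is a globally finitely presented Stein space, and the source and target projections to $\mathbb{A}^1$ are smooth. This is precisely what is needed to conclude that $B\Pi_1(\mathbb{A}^1,n.0)$ is a geometric stack in the sense of \Cref{subsec:tannaka_duality_geometric}: the atlas $p:\mathbb{A}^1 \to B\Pi_1(\mathbb{A}^1,n.0)$ is a smooth surjection from a derived Stein space, and its diagonal is representable by the Stein arrow space via the fibre product $\mathbb{A}^1 \times_{B\Pi_1(\mathbb{A}^1,n.0)} \mathbb{A}^1 \simeq \Pi_1(\mathbb{A}^1,n.0)$.

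With geometricity in hand, \Cref{thm:TD_for_1_geometric_stacks} applies with $S = \mathbb{A}^1$ and $\mathcal{X} = B\Pi_1(\mathbb{A}^1,n.0)$, giving an equivalence
\[
\mathrm{Hom}(\mathbb{A}^1, B\Pi_1(\mathbb{A}^1,n.0)) \xrightarrow{\ \simeq\ } \mathrm{Hom}^{\tame}_{\Liq_p(\mathbb{C})}(\QCoh(B\Pi_1(\mathbb{A}^1,n.0)), \QCoh(\mathbb{A}^1)).
\]
Taking automorphisms of the object corresponding to $p$ on each side and using that automorphisms of the atlas morphism are computed by the diagonal fibre product as above, one obtains
\[
\mathrm{Aut}^{\otimes}(p^*) \simeq \mathrm{Aut}(\mathbb{A}^1 \xrightarrow{p} B\Pi_1(\mathbb{A}^1,n.0)) \simeq \Pi_1(\mathbb{A}^1,n.0),
\]
as desired.

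The main obstacle is the Stein/globally-finitely-presented check for the arrow space of $\Pi_1(\mathbb{A}^1,n.0)$ when $n \geq 2$; unlike the $n=1$ case, these groupoids are not action groupoids, so one cannot simply write down their underlying space as $G \times X$. I would address this by importing the explicit chart from \cite{Gualtieri_Li_Pym_Stokes}, where $\Pi_1(\mathbb{A}^1,n.0)$ is identified with a concrete Stein surface and the structure maps are given by entire formulas involving $z$ and a time variable; this both identifies the underlying space with a globally finitely presented Stein space and verifies that the source map is a smooth submersion (hence that the atlas map $p$ is smooth). Once these geometric facts are recorded, the Tannakian reconstruction is purely formal from \Cref{thm:TD_for_1_geometric_stacks}, exactly as in \Cref{thm:Sto_1_example}.
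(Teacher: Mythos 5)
Your proposal is correct and follows the same route as the paper: both identify the arrow space of $\Pi_1(\mathbb{A}^1,n.0)$ with $\mathbb{A}^2$ via the explicit charts of \cite{Gualtieri_Li_Pym_Stokes} (the paper cites Theorem 3.21 there), conclude that the diagonal is a globally finitely presented Stein space so that \Cref{thm:TD_for_1_geometric_stacks} applies, and then finish exactly as in \Cref{thm:Sto_1_example}.
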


\begin{proof}
    By Theorem 3.21 in \cite{Gualtieri_Li_Pym_Stokes}, the underlying space of arrows $\Pi_1(\mathbb{A}^1, n.0)$ can be identified with $\mathbb{A}^2$. Thus the diagonal is once again a globally finitely presented Stein space, and \Cref{thm:TD_for_1_geometric_stacks} applies. The rest of the argument is identical to the proof of \Cref{thm:Sto_1_example}.
\end{proof}

\subsection{Necessity of working with quasicoherent sheaves}

It is not possible to obtain identical results by working with just liquid \textit{coherent} sheaves. To see this, taking the stack $B\mathbb{Z}$, $\text{Coh}(B\mathbb{Z})$ corresponds to finite-dimensional liquid representations. Since $\mathbb{Z}$ is discrete, such liquid representations in fact correspond to abstract representations of $\mathbb{Z}$ (i.e. without any liquid or holomorphic structure). We are then in the setting of \Cref{Pro-algebraic_completion}, which shows that taking automorphisms of the forgetful functor only recovers the pro-algebraic completion of $\mathbb{Z}$.

\end{document}